\documentclass[11pt]{article}
\usepackage{amsmath} 
\usepackage{lmodern}
\usepackage[T1]{fontenc}
\usepackage[english]{babel}
\usepackage[utf8]{inputenc}
\usepackage[a4paper]{geometry}
\geometry{margin=1.5in}
\usepackage{amssymb}
\usepackage[hypcap]{caption}
\usepackage{amsmath, amsthm, amsfonts, amscd}
\usepackage[nottoc,numbib]{tocbibind}
\usepackage{mathrsfs}
\usepackage{float}
\usepackage{graphicx}
\usepackage[labelformat=empty]{caption}
\usepackage{microtype}
\usepackage{todonotes}
\usepackage{makeidx}
\usepackage{verbatim}
\usepackage{hyperref}
\hypersetup{
colorlinks=false,
linkbordercolor=red,
pdfborderstyle={/S/U/W 1}
}
\usepackage{mdwlist}
\usepackage{mathabx}
\usepackage{mathtools}
\usepackage{titlesec}

\newtheorem{thm}{\scshape{Theorem}}[section]
\newtheorem{cor}[thm]{\scshape{Corollary}}
\newtheorem{prop}[thm]{\scshape{Proposition}}
\newtheorem{fact}[thm]{\scshape{Fact}}
\newtheorem{lemma}[thm]{\scshape{Lemma}}

\newtheorem{question}[thm]{\scshape{Question}}

\newtheorem*{theorem*}{Theorem}

\newtheorem*{LCLT}{Local Central Limit Theorem (2.1.3 \cite{Lawler})}
\newtheorem*{SZL}{Schwartz-Zippel Lemma}
\newtheorem*{prop*}{Proposition}
\newtheorem*{cor*}{Corollary}

\theoremstyle{definition}
\newtheorem{dfn}[thm]{\scshape{Definition}}
\newtheorem{con}[thm]{\scshape{Convention}}
\newtheorem{example}[thm]{\scshape{Example}}

\newtheorem{remark}[thm]{\scshape{Remark}}

\def\G{\Gamma}

\def\subset{\subseteq}

\def\={\cong}

\def\e{\epsilon}

\def\mb{\mathbf}
\def\mc{\mathcal}
\def\mbb{\mathbb}

\def\l{\lambda}

\def\ab{{\it ab}}
\def\Is{{\it Is}}
\def\ker{{\it ker}}
\def\G_0{G/\Is(\gamma_{3}(G))}

\def\End{\it End}
\newcommand{\ol}[1]{{#1}{}^{\mbb{Q}}}

{}

\title{Full rank presentations and nilpotent groups: structure, Diophantine problem, and genericity}
\author{Albert Garreta\footnote{University of the Basque Country,  Spain, \emph{garreta.a@gmail.com} (corresponding author).}, Alexei Miasnikov\footnote{Stevens Institute of Technology, NJ, USA, \emph{amiasnikov@gmail.com}}, and Denis Ovchinnikov\footnote{Stevens Institute of Technology, NJ, USA, \emph{dovchinn@stevens.edu}\newline This work was supported by the Mathematical Center in Akademgorodok.}
}

\bibliographystyle{plain}

\begin{document}

\maketitle

\date{}

\begin{abstract}
We study finitely generated nilpotent groups $G$ given by full rank finite presentations $\langle A \mid R\rangle_{\mc{N}_c}$ in the  variety $\mathcal{N}_c$ of nilpotent groups of class at most $c$, where $c \geq 2$.  We prove that if the deficiency $|A| - |R| $ is at least $2$ then the group $G$ is virtually free nilpotent, it is quasi finitely axiomatizable (in particular, first-order rigid), and it is almost (up to finite factors) directly indecomposable. One of the main results of the paper is that the Diophantine problem in nilpotent groups given by full rank finite presentations $\langle A \mid R\rangle_{\mc{N}_c}$ is undecidable if $|A| - |R|  \geq 2$  and decidable otherwise.  We show that this class of groups is rather large since finite presentations asymptotically almost surely have full rank, so a random nilpotent group in the few relators model has a full rank presentation asymptotically almost surely.  Full rank presentations give one a useful  tool to approach random nilpotent groups and study their properties. Note, that the results above significantly improve our understanding of the Diophantine problem in finitely generated nilpotent groups: from a few special examples of groups with undecidable Diophantine problem we got to the place where we know that the Diophantine problem in all "typical" nilpotent groups is also undecidable.  
\end{abstract}

\tableofcontents

\section{Introduction}

In this paper we study finitely generated nilpotent groups $G$ given by full rank finite presentations $\langle a_1, \ldots, a_n \mid r_1, \ldots, r_m\rangle_{\mc{N}_c}$ in the  variety $\mathcal{N}_c$ of nilpotent groups of class at most $c$, where $c \geq 2$. We prove that if $m \leq n-2$ then the group $G$ has undecidable Diophantine problem, it is quasi finitely axiomatizable (in particular, first-order rigid), it is almost directly indecomposable, and virtually free nilpotent. The class of such groups is rather large. Indeed, it turns out that for fixed $n$ and $m$ a finite presentation $\langle a_1, \ldots, a_n \mid r_1, \ldots, r_m\rangle_{\mc{N}_c}$ has full rank asymptotically almost surely. In particular, random nilpotent groups (in the few relators model) have full rank presentations asymptotically almost surely. Hence, they asymptotically almost surely satisfy all the properties mentioned above. 

\subsection{Groups with full rank presentations}

Let $A = \{a_1, \ldots,a_n\}$ be a finite alphabet,  $A^{-1} = \{a_1^{-1}, \ldots,a_n^{-1}\}$, $A^{\pm 1} = A \cup A^{-1}$, $(A^{\pm 1})^\ast$  the set of all (finite) words in $A^{\pm 1}$, and  $R = \{r_1, \ldots,r_m \}$  a finite subset of $(A^\pm)^\ast$. We fix this notation for the rest of the paper. 

A pair $(A,R)$ is called  a {\em finite  presentation}, we  denote it by $\langle A \mid R\rangle$ or $\langle a_1, \ldots,a_n \mid r_1, \ldots,r_m \rangle$. If $\mathcal V$ is a variety or a quasivariety of groups then a finite presentation $\langle A \mid R\rangle$ determines a group $G = F_{\mathcal V}(A)/\langle \langle R \rangle \rangle$, where  $F_{\mathcal V}(A)$ is a free group in $\mathcal V$ with basis $A$ and $\langle \langle R \rangle \rangle$ is the normal subgroup of $F_{\mathcal V}(A)$ generated by $R$. In this case we write $G = \langle A \mid R\rangle_{\mathcal V}$.  There are several objects related to the presentation $\langle A \mid R\rangle_{\mc{V}}$.  The {\em relation matrix} $M(A,R)$ of the presentation $\langle A \mid R\rangle_{\mc{V}}$
is an  $m \times n$ matrix whose $(i,j)$-th entry is the sum of the exponents of the $a_j$'s  that occur in $r_i$. It was introduced by Magnus in \cite{Magnus} (see also \cite{LS}, Chapter II.3,   for its ties  to relation modules in groups). For  a finite presentation $\langle A \mid R\rangle_{\mc{V}}$   the number $|A| - |R|$, if non-negative, is called the {\em deficiency} of the presentation (see \cite{LS}, Chapter II.2 for a short survey on groups with positive deficiency).  Sometimes we denote the relation matrix $M(A,R)$ simply by $M$.  
The matrix $M(A,R)$ has {\em full rank} if its rank is equal to $\min\{|A|,|R|\}$, i.e., it is the maximum possible. 

Groups given in  the variety ${\mathcal N}_c$  of all nilpotent groups of class $\leq c$ by full rank presentations have a rather restricted structure, as witnessed by the following result from Section 4.1.

\medskip \noindent
{\bf Theorem 4.2.}
{\it Let $G$ be a finitely generated nilpotent group of class $c \geq 1$ given by a finite full rank presentation $G = \langle A \mid R\rangle_{{\mathcal N}_c} = \langle a_1, \ldots,a_n \mid  r_1, \ldots,r_m \rangle_{{\mathcal N}_c}$.
 Then  there exists a subset $A_0\subseteq A$ with $|A_0| = n-m$ ($A_0=\emptyset$ if $m\geq n$) such that the following holds:
\begin{enumerate}
\item If $m\geq n$, then $G$ is finite.
\item If $m=n-1$, then $\langle A_0\rangle$ is infinite cyclic and has finite index in $G$.%
\item If $m \leq n-2$, then $\langle A_0 \rangle$ is a free $c$-step nilpotent subgroup of rank $n-m$ which has finite index in $G$. 
\end{enumerate}
 Furthermore, $A_0$ can be chosen to be  any subset $\{a_{i_1}, \ldots,a_{i_{n-m}}\}$ of $A$ such that the rank of the matrix $M(A,R)$ coincides with the rank of the matrix obtained from $M(A,R)$ after removing its $i_1, \dots, i_{n-m}$-th columns.
}

\medskip \noindent
The result above complements the Generalized Freiheissatz for ${\mathcal N}_c.$  In \cite{Romanovskii} Romanovskii  proved that if a   finitely generated nilpotent group given in the variety  ${\mathcal N}_c, c \geq 1,$ by a finite presentation $\langle A \mid R\rangle_{\mc{N}_c}$ of deficiency $d  \geq 1$ then there is a subset of generators $A_0 \subseteq A$ with $|A_0| = d$ which freely generates a free nilpotent subgroup $H = \langle A_0\rangle$  of nilpotency class $c$.   The items 2) and 3)  above show that if in addition the presentation $\langle A \mid R\rangle_{\mc{N}_c}$ has full rank then the subgroup $H$ has finite index in $G$. Note that in this case the subgroup $H$ can be found algorithmically. 

Note also that in the assumptions of  Theorem 4.2, if in addition $G/G'$ has trivial torsion subgroup (all entries in the Smith normal form of the relation matrix are 1's),  then $G$ is trivial if $m\geq n$; and free nilpotent of rank $n-m$ if $m\leq n-1$.

Some other structural results  on finitely generated nilpotent groups $G$ given by finite presentations of full rank in the variety $\mathcal N_2$ are given in Section 4.3. In particular, we show that if  the deficiency of the presentation is at least 2 then the  center $Z(G)$ of $G$ coincides with the derived subgroup $G'$ of $G$.     

In general, if in a variety $\mathcal N_c$, $c\geq 2$, a nilpotent group $G$ has a full rank finite presentation with deficiency $\geq 2$ then $Z(G) \leq G'$. This leads to some further interesting properties of these groups. Following \cite{Lub} we say that a finitely generated group $G$ is {\em first-order rigid} if for any finitely generated group $H$ elementary equivalence $G \equiv H$ implies isomorphism $G \simeq H$.  Furthermore, a finitely generated group $G$ is called {\em quasi finitely axiomatizable } or {\em QFA}, if there is a sentence $\phi$ of group theory that holds in $G$ and such that  for for any finitely generated group $H$ if $\phi$ holds in $H$ then $G \simeq H$.  We show in Section 4.4 (Theorem \ref{th:FAQ}) that every nilpotent group $G$ given in a  class $\mathcal{N}_c$, $c \geq 2$, by a finite full rank presentation of deficiency at least $2$ is QFA, hence first-order rigid.

In another direction, we show in Theorem \ref{t: direct_decompositions} that in any direct decomposition of a nilpotent group $G$ given in a  class $\mathcal{N}_c$, $c \geq 2$, by a finite full rank presentation of deficiency $\geq 1$,  all, but one, direct factors are finite.  So such groups are directly indecomposable, if ignoring finite factors.

\subsection{Diophantine problems}

In Section 3 we study the Diophantine problem in finitely generated nilpotent groups. This  is a continuation of research started in \cite{GMO2}.

  Recall, that the \emph{Diophantine problem} in an algebraic structure $\mathcal{A}$ (denoted $\mc{D}(\mc{A})$)  is the task to determine whether or not  a given finite system of equations with constants in $\mc{A}$ has a  solution in $\mathcal{A}$. $D(\mathcal{A})$ is \emph{decidable} if there is an algorithm that given a finite system $S$ of equations with constants in $\mathcal{A}$ decides whether or not $S$ has a solution in $\mc{A}$. Furthermore, $\mc{D}(\mc{A})$ is  \emph{reducible} to  $\mc{D}(\mc{M})$, for   another structure $\mc{M}$,  if there is an algorithm that for any finite system of equations $S$  in $\mc{A}$ computes a finite system of equations $S_{\mc{M}}$ in $\mc{M}$ such that $S$ has a solution in $\mc{A}$ if and only if $S_{\mc{M}}$  has a solution in $\mc{M}$. 

 Note that due to the classical result of Davis,  Putnam, Robinson and Matiyasevich, the Diophantine problem  $\mc{D}(\mathbb{Z})$ in the ring of integers $\mathbb{Z}$ is undecidable \cite{mat, DPR}.  Hence if $\mc{D}(\mathbb{Z})$ is reducible to $\mc{D}(\mc{M})$, then  $\mc{D}(\mc{M})$ is also  undecidable.  

To prove  that  $\mc{D}(\mc{A})$ reduces to  $\mc{D}(\mc{M})$ for some structures  $\mc{A}$ and  $\mc{M}$  it suffices to show that  $\mc{A}$ is  interpretable by equations (or \emph{e-interpretable}) in $\mc{M}$.  E-interpretability is a variation of the classical notion of the first-order interpretability, where instead of arbitrary first-order formulas finite systems of equations are used as the interpreting formulas (see Definition \ref{d: e-int} for details).  The main relevant property of such interpretations is that if $\mc{A}$ is e-interpretable in $\mc{M}$ then $\mc{D}(\mc{A})$  is reducible to $\mc{D}(\mc{M})$ by a polynomial time many-one reduction (Karp reductions).

For finitely generated nilpotent groups $G$ we use the following  technique for e-interpretability of $\mathbb{Z}$ in $G$, which resembles  some earlier arguments by  Duchin, Liang, and Shapiro in \cite{Duchin}, and by  Romankov in \cite{Romankov100}. An element $g$ of a group $G$ is said to be   \emph{centralizer-small} (\emph{c-small}) if  its centralizer has the following form $C_G(g)=\{ g^t z \ | \ t\in \mbb{Z}, \ z \in Z(G)\}$ and $C_G(g)/Z(G)$ is infinite cyclic.  
We show in Theorem \ref{mainThmEq2} that if $G$ is  a finitely generated $2$-step nilpotent group, which has   a centralizer-small element then the ring $\mbb{Z}$  is e-interpertable in $G$.    In fact, in this case the largest ring of scalars of $G$ is isomorphic to $\mbb{Z}$ (Theorem \ref{CSmallMaxZ}). 
Furthermore, we prove that if $G$ is a  nilpotent group given in the variety ${\mathcal N}_2$, by a finite full rank presentation of deficiency $\geq 2$ then $G$  has  centralizer-small elements.   

The following technique extends the method of c-small elements to finitely generated nilpotent groups $G$ of arbitrary nilpotency class $c \geq 2$. Namely, it turns out  that the 2-nilpotent factor-group $G/\gamma_3(G)$ of $G$ modulo the third term $\gamma_3(G)$ of the lower central series of $G$ is e-interpretable in $G$  (see Proposition \ref{any_step}). Moreover, if $G$ is given by a finite full rank presentation in the variety ${\mathcal N}_c$ then  $G/\gamma_3(G)$ also has a full rank presentation in the variety ${\mathcal N}_2$.  Now, the argument above  allows one to describe completely the  Diophantine problems in nilpotent non-abelian groups given by full rank presentations.  Note that the Diophantine problem in finitely generated abelian groups is decidable (linear algebra).

\medskip \noindent
{\bf Theorem 4.14.}
{\it Let $G$ be a  nilpotent group given in the variety ${\mathcal N}_c$, $c\geq 2$,  by a finite full rank presentation of deficiency $d$.  Then the following holds: 
\begin{enumerate}
\item If $d \geq 2$, then the ring $\mathbb{Z}$ is e-interpretable in $G$ and the Diophantine problem in $G$ is undecidable.
\item If $d\leq 1$ then the Diophantine problem in $G$ is decidable. 
\end{enumerate}
}

\medskip
The theorem  above complements  some earlier results from \cite{GMO1}. Indeed, we showed in \cite{GMO1} that if $G$ is a finitely generated nilpotent  group, which is not virtually abelian, then some finitely generated ring ${\mathcal O}_G$ of algebraic integers is e-interpretable in $G$.  There  is a well-known  conjecture in number theory (see, for example, \cite{DL,PZ}) that states that the Diophantine problem in rings of algebraic integers is undecidable.    Hence we conjectured in \cite{GMO1} that  Diophantine problem is  undecidable in  finitely generated nilpotent  groups, which are  not virtually abelian.  Theorem 4.3 shows that if $G$ has a full rank presentation then ${\mathcal O}_G \simeq \mathbb{Z}$, so the conjecture above holds in such $G$.

\subsection{Random nilpotent groups}

In \cite{Duchin}, Cordes, Duchin, Duong, Ho, and Sanchez outlined a general approach to random finitely generated nilpotent groups, which is a natural analog of the classical \emph{few-relators} and the \emph{density}  models of random finitely presented groups. 

Recall, that  to get a random finitely presented group  on $m$ generators one may take a finite alphabet $A = \{a_1, \dots, a_n\}$  and  add  a finite set of "random" relators $R \subseteq (A^{\pm 1})^*$, thus obtaining a "random" finitely presented group given by the presentation $\langle A \mid R\rangle$.  More precisely, every relator in $R$ is chosen among all words of a certain length $\ell$ in the alphabet $A^{\pm 1}$ with uniform probability. The length $\ell$ is thought of as an integer variable that tends to infinity, and the  number  of chosen relators is taken to be a function of $\ell$. For instance, $|R| = (2m+1)^{d\ell}$ ($0<d<1$) in the \emph{density model}, whereas $|R|$ is constant in the \emph{few-relators model}. Then one can consider  the probability $p_{\ell}$ that a group $G= \langle A \mid R\rangle$   satisfies some property $P$, for a fixed length $\ell$. The limit $p=\lim_{\ell \to \infty} p_{\ell}$, if it exists, is called the \emph{asymptotic probability} that $G$ satisfies $P$. If $p=1$, then  $G$ is said to satisfy $P$ \emph{asymptotically almost surely} (a.a.s.)  For example, a well-known result of Gromov \cite{Gromov} states that, in the density model,  $G$ is hyperbolic  if $d<1/2$, and  finite  if $d > 1/2$, a.a.s. See \cite{Olivier} for more information on random finitely presented groups.

Observe that the approach above can be utilized in any class of groups where finite presentations make sense, in particular, in any variety of groups.  Following \cite{Duchin}, for every $c \in \mathbb{N}$ one can take the variety ${\mathcal N}_c$ of all nilpotent groups of nilpotency class at most $c$ and consider random finitely presented groups in ${\mathcal N}_c$ with respect to the density of  the few-relators models.    In \cite{Duchin} it is proved (among other results) that in the class ${\mathcal N}_c$ the group $G = \langle A \mid R\rangle_{\mc{N}_c}$  is trivial a.a.s.\ (as $\ell \to \infty$) if and only if $|R|$ tends to infinity as a function of $\ell$. In particular, the density model in the class ${\mathcal N}_c$ yields trivial groups a.a.s. for any density parameter $d$. Because of this  in the class ${\mathcal N}_c$ the density model does not look very interesting. 

A  completely different model of random nilpotent groups was introduced by  Delp, Dymarz and Schaffer-Cohen in \cite{Dymarz}, where the authors obtain random 2-generated torsion-free nilpotent groups  as  subgroups of  groups of unitriangular matrices generated by two random words in the standard generators. This model is based on  the classical result  that any finitely generated     torsion-free nilpotent group embeds into a  suitable unitriangular group $UT_n(\mathbb{Z})$.  

Yet another model of randomness in the class ${\mathcal N}_c$  was considered by the authors in \cite{GMO2}.  The idea of this approach is based on finite descriptions of finitely generated torsion-free  nilpotent groups via their polycyclic presentations, or, equivalently, via Malcev's bases and their structural constants.

In this paper, following \cite{Duchin},  we study the structure of 
random nilpotent groups in  ${\mathcal N}_c$   in the few-relators model. The main result here (Theorems \ref{B} and \ref{B'}) is that    a random finite presentation (in the class of all groups, or any other variety  of groups) in the few-relators model a.a.s. has full rank. Hence,   the results above on finitely generated groups given in the class ${\mathcal N}_c$ by finite full rank presentations apply to the random nilpotent group asymptotically almost surely.  We mention here only one such result  and refer to Section \ref{sec:random} for precise formulations of the rest.

\medskip \noindent
{\bf Theorem 6.5.}
{\it Let $n, m, c \in \mbb{N}$, and let $G$ be a finitely generated $c$-step nilpotent group given by a presentation $\langle a_1, \dots, a_n \mid r_1, \dots, r_m \rangle_{\mc{N}_c}$, where $c\geq 2$, and all relators $r_i$ have length $\ell$. Then  the following holds asymptotically almost surely  as $\ell \to \infty$: 
\begin{enumerate}
\item If $m \leq n-2$, then the ring $\mathbb{Z}$ is e-interpretable in $G$  and the Diophantine problem in $G$ is undecidable.
\item If $m \geq n-1$  then the Diophantine problem in $G$ is decidable. 
\end{enumerate}
}

\medskip
Theorem 6.4 makes a rather big step in our understanding of decidability of equations in finitely generated nilpotent groups: from a few sporadic examples where the Diophantine problem was known to be undecidable \cite{GMO1}  - to the case when  asymptotically  almost surely all of them have the  Diophantine problem undecidable.

\section{Preliminaries}

\subsection{Nilpotent groups}

Let $G$ be a group. We use the following notation:   $[g,h] = g^{-1}h^{-1}gh$ is  the commutator of $g,h \in G$ and $g^h = h^{-1}gh$ is the conjugate of $g$ by $h$.  For a subset $X \subseteq G$ by $\langle X \rangle$ and $\langle \langle X\rangle \rangle$ we denote, respectively,  the subgroup and the normal subgroup  of $G$ generated by $X$;  for subgroups $H, K \leq G$  by $[H,K]$ we denote the subgroup in $G$ generated by all commutators $[h,k]$, where $h \in H, k \in K$. In particular,  $[G,G]$ is the derived subgroup of $G$, also denoted by $G'$.     More generally, we define the terms of the lower central series of $G$ inductively by 
$\gamma_1(G)=G$,  $\gamma_{\ell+1}(G) = [ \gamma_{\ell}(G), G]$, in particular, $\gamma_2(G)= G'$. Recall that a  group $G$ is $c$-step nilpotent (or nilpotent of class $c$) if $\gamma_{c+1}(G) = 1$, but $\gamma_{c}(G) \neq 
1$. An element from $\gamma_k(G)$ is called a \emph{$k$-fold} commutator. If $G$ is generated by a set $A$ then a \emph{basic} $k$-fold commutator (with respect to $A$) is an element of the form $[g_1, \dots, g_k]$ for $g_i \in A^{\pm 1}$ for all $i$.  The center of $G$ is denoted by $Z(G)$.   

The identities 
$$
[ab, c]=[a,c]^b[b,c], \ \ \ [a,bc]=[a,c][a,b]^c$$
 hold in any group $G$. This together with the inclusion $\gamma_c(G) \leq Z(G)$ can be used to prove by induction on $c$ that if $G$ is $c$-step nilpotent then
\begin{align}
&[g_1, \dots, g_{i-1}, g_i g_i', g_{i+1}, \dots, g_c] \label{e: multilinear}\\ = [g_1,& \dots, g_{i-1}, g_i, g_{i+1}, \dots, g_c][g_1, \dots, g_{i-1}, g_i', g_{i+1}, \dots, g_c]\nonumber
\end{align}
for all $1 \leq i \leq s$ and any elements $g_1, \dots, g_c$, $g_i'$. That is, informally, the $c$-fold commutator of $G$ behaves `multilinearly'.
We will use this fact implicitly from now on.

Let  $F = F(A)$ be a free group with basis  $A =\{a_1, \dots, a_n\}$.   Then the group $N_c(A) = F/\gamma_{c+1}(F)$ is a free $c$-step nilpotent group with basis $A$. It is, indeed, a free group with basis $A$ in the variety $\mathcal{N}_c$ of all nilpotent groups of class at most $c$. The number $n$ is the \emph{rank} of  $N_c(A)$. Sometimes we write $N_{c,n}$ for a free $c$-step nilpotent group of rank $n$, or just $N$.  Every $n$-generated $c$-step nilpotent group $G$ is isomorphic to the factor-group $N_{c,n}/\langle \langle R\rangle \rangle$ for some finite subset $R \subseteq N_{c,n}$. In this case we write 
$$
G = \langle A \mid R\rangle_{\mathcal{N}_c}
$$
and say that $G$ is given by a finite  presentation $\langle A \mid R\rangle_{\mc{N}_c}$ in the variety $\mathcal{N}_c$.  
 
 For $c=2$, the set $\mathcal{A}=A \cup \{[a_i, a_j] \mid i< j \}$ is a so-called \emph{Malcev basis} of $N = N_{2,n}(A)$. Any $g\in N$ admits the expression 
\begin{equation}\label{malcev_rep}
g= 
a_1^{\alpha_1} \dots a_m^{\alpha_n}\prod_{1\leq i < j \leq n} [a_i, a_j]^{\gamma_{i,j}}, \quad \left(\alpha_k, \gamma_{i,j} \in \mathbb{Z}\right),
\end{equation}
which is \emph{unique} up to the order of the commutators $[a_i, a_j]$. This follows from the fact that $\{[a_i, a_j] \mid i< j\}$ forms a basis of $N'$, while $A$ projects onto a basis of $N/ N'$. Usually, \eqref{malcev_rep} is called the \emph{Malcev representation} of $g$ (with respect to $\mathcal{A}$), and the integers $\alpha_k$, $\gamma_{i,j}$ are called the \emph{Malcev coordinates} of $g$. It is possible to extend this definition to any finitely generated free nilpotent group (of any nilpotency class), or even to any finitely generated torsion-free  nilpotent group \cite{KaMi}. However, the definition just given will suffice for our purposes. 

The following results  are well known (see, for example, \cite{Segal, Warfield}).

\begin{lemma}\label{SegalCor2} 
Let $G$ be a finitely generated nilpotent group. Then the following holds:
\begin{enumerate}
\item [1)] If  $G'$ is finite then the center of $G$ has finite index in $G$. In particular, $G$ is virtually abelian. \item [2)] If $G/G'$ is finite then $G$ is finite.
\end{enumerate}
\end{lemma}	

\subsection{$\mbb{Q}$-completions of nilpotent groups}\label{s: Q_completions}

In order to prove Theorem \ref{A} we will need some basic facts about $\mathbb{Q}$-completions of finitely generated nilpotent groups. We follow Section 8 from \cite{Warfield} (alternatively, see \cite{Kargapolov}).

A group $G$ is a \emph{$\mbb{Q}$-group} if any equation of the type $x^n = g$, where $n\in \mbb{N}\smallsetminus\{0\}$ and $g \in G$, has a unique solution in $G$. The language of \emph{$\mbb{Q}$-groups}, denoted $\mc{G}_{\mbb{Q}}$, consists of the language of groups together with countably many ``$n$-th root'' operations $\sqrt[n]{}$ ($n\in \mbb{N}\smallsetminus\{0\}$), so that a $\mbb{Q}$-group $G$ may be viewed as a structure in the language $\mc{G}_{\mbb{Q}}$, where   $(\sqrt[n]{g})^n=g$ for all $g$ in $G$  and all $n\in \mbb{N}\smallsetminus\{0\}$. We say that $G$ is a finitely $\mbb{Q}$-generated $\mbb{Q}$-group if $G$ is finitely generated in the language $\mc{G}_{\mbb{Q}}$.    We will use the notation  $\langle \cdot \rangle_{\mbb{Q}}$ to denote generation as a $\mbb{Q}$-group.

\begin{dfn}
Let $G$ be a nilpotent group. A $\mbb{Q}$-group $\ol{G}$ is a \emph{$\mbb{Q}$-completion} of $G$ if there is a homomorphism $\phi: G\to \ol{G}$ such that:
\begin{enumerate}
    \item $\phi(G)$ $\mbb{Q}$-generates $\ol{G}$.
    \item For any homomorphism $f: G\to H$ into a $\mbb{Q}$-group $H$ there is a unique homomorphism $\ol{f}:\ol{G}\to H$ such that $\phi \circ\ol{f} = f$.
\end{enumerate}
\end{dfn}

Observe that $\ol{G}$ is necessarily nilpotent of at most the same class as $G$. Nilpotent $\mbb{Q}$-groups form a variety in the language $\mc{G}_{\mbb{Q}}$ and hence one can speak of free nilpotent $\mbb{Q}$-groups. We denote such variety by $\mc{N}_c^{\mbb{Q}}$

The next result summarizes all basic properties of $\mbb{Q}$-completions that we will need in order to prove Theorem \ref{A}.

\begin{fact}\label{p: properties_Q_completions}
Let $G$ and $H$ be nilpotent groups. Then the following hold:
\begin{enumerate}
    \item  Every nilpotent group $G$ has a unique $\mbb{Q}$-completion, and if the group is  torsion-free then $\phi: G\to \ol{G}$ is an embedding. In the latter case we identify $G$ with $\phi(G)$.
    \item The functor $G\to G^{\mbb{Q}}$ respects composition of homomorphisms and short exact sequences. Hence if $K\leq G$ then $\ol{K}\leq\ol{G}$, and if  $H\cong G/K$ then $\ol{H}\cong \ol{G}/\ol{K}$.
    \item  In particular, if $G$ admits a presentation $\langle a_1, \dots, a_n\mid r_1, \dots, r_m\rangle_{\mc{N}_c}$ then $\ol{G}$ is presented by $\langle a_1, \dots, a_n\mid r_1, \dots, r_m\rangle_{\mc{N}_c^{\mbb{Q}}}$.
    \item If $H\leq G$ then $\ol{H}=\ol{G}$ if and only if $H$ is of finite index in $G$. 
    \item Let $G$ be a nilpotent group generated by $a_1,\dots,a_n$. Then $G$ is a free nilpotent group of class $c$ freely generated by $a_1, \dots, a_n$  if and only if the $\mbb{Q}$-completion $\ol{G}$  is free in the  variety of nilpotent  $\mbb{Q}$-groups of class $c$ freely generated by $\phi(a_1), \dots, \phi(a_n)$. 
\end{enumerate}
\end{fact}
\begin{proof}
Property 1 is proved in  \cite[Theorems 8.5 and 8.11]{Warfield}. Property 2 is a consequence of  \cite[Theorems 8.5 and 8.9]{Warfield}, and Property 3 is a simple consequence of Property 2.  Property 4 follows from \cite[Theorem 8.2]{Warfield}. To prove Property 5 note that, by Property 1, if $G$ is a free nilpotent group of class $c$ on $n$ generators  then $\ol{G}$ satisfies the universal property in the class of  nilpotent $\mbb{Q}$-groups of class $c$ on $n$ generators, hence in this case $\ol{G}$ is a  free nilpotent $\mbb{Q}$-group. In the opposite direction, if $G$ is not free, then $G=N/K$, where $N$ is a free nilpotent group freely generated by $a_1, \dots, a_n$ and $K$ a non-trivial subgroup of $N$. Since $K$ is torsion-free it embeds into its $\mbb{Q}$-completion $K^\mbb{Q}$, so $K^\mbb{Q}$ is a non-trivial $\mbb{Q}$-group.  Hence $\ol{G}\cong\ol{N}/\ol{K}$ is a proper factor of $\ol{N}$, and thus $\ol{G}$ cannot be a free nilpotent $\mbb{Q}$-group freely generated by $\phi(a_1), \dots, \phi(a_n)$.  
\end{proof}

\subsection{E-interpretability}

Let $\mathcal{M}=\left(M; f_i, r_j, c_k \mid i,j,k \right)$ be an algebraic structure, where $M$ is the universe set of $\mathcal{M}$, and  $f_i, r_j, c_k$ are the function, relation, and constant symbols of $\mathcal{M}$.
 In this paper all algebraic structures are either groups or rings, thus in the definition below $\mc{M}$ may be thought of as one of these. In what follows we often  use non-cursive boldface letters to denote tuples of elements: e.g.\ $\mathbf{a}=(a_1, \dots, a_n)$.
\begin{dfn}\label{d: e-def} 
 A set $D \subset M^m$ is called \emph{definable by equations} in $\mathcal{M}$, or \emph{e-definable} in $\mathcal{M}$, if there exists a finite system of equations in the language of $\mc{M}$
$
\Sigma_D(x_1,\ldots,x_m, y_1, \dots, y_k)$ on variables $(x_1, \dots, x_m, y_1, \dots, y_k)=(\mathbf{x}, \mathbf{y})$ and some constants from $\mc{M}$,  and such that 
for any tuple $\mathbf{a}\in M^m$, one has that  $\mathbf{a} \in D$ if and only if the system $\Sigma_D(\mathbf{a}, \mb{y})$ on variables $\mathbf {y}$ has a solution in $\mathcal{M}$. In this case $\Sigma_D$ is said to \emph{e-define} $D$ in $\mc{M}$.
\end{dfn}
From the viewpoint of number theory, an e-definable set is a Diophantine set, since it is defined in $\mc{M}$ by the formula $\exists y_1 \ldots \exists y_n\Sigma_D(\mathbf{x}, \mb{y})$. From the perspective of algebraic geometry, an e-definable set is a projection  of an affine algebraic set. 
\begin{dfn}\label{d: e-int}
An algebraic structure $\mathcal{A}= \left(A; f, \dots, r, \dots, c, \dots \right)$  is called \emph{e-interpretable} in an algebraic structure $\mathcal{M}$ if there exists $n\in \mathbb{N}$, a subset $D \subseteq \mathcal{M}^n$ and an onto map  (called the \emph{interpreting} map)
$
\phi: D \twoheadrightarrow \mathcal{A},
$ 
such that:
\begin{enumerate}
\item $D$ is e-definable in $\mathcal{M}$. 
\item  For every function $f=f(x_1, \dots, x_n)$ in the language of $\mathcal{A}$, the preimage by $\phi$ of the graph of $f$, i.e.\ the set $\{(x_1, \dots, x_k, x_{k+1}) \mid \phi(x_{k+1}) = f(x_1, \dots, x_k)\}$, is e-definable in $\mathcal{M}$. 
\item For every relation $r$ in the language of $\mathcal{A}$, and also for  the equality relation $=$ on $\mathcal{A}$,  the preimage by $\phi$ of the graph of $r$ is e-definable in $\mathcal{M}$.
\end{enumerate}
\end{dfn}

\begin{example}\label{group_interpretations}
%
The center $Z(G)$ of a finitely generated group $G =\langle g_1, \dots, g_n \rangle$ is e-definable in $G$ as a set. Indeed,  $x\in G$ belongs to $Z(G)$ if and only if it commutes with all $g_i$'s, and hence $Z(G)$ (seen as a set) is defined in $G$ by means of the following system of equations on the single variable $x$:
$$
\bigwedge_{i=1}^{n}  \left( xg_i = g_ix \right).
$$
If, additionally, we regard $Z(G)=\left(Z(G); \cdot, {}^{-1}, 1 \right)$ as an algebraic structure with  operations and constants inherited from $G$, then $Z(G)$ is still e-interpretable in $G$ with interpreting map ${\it id}: Z(G) \subseteq G \to Z(G)$, $id(g)=g$. 
\end{example}

\begin{con}
In general, any subgroup $H$ of a group $G$ that is e-definable in $G$ as a set is also e-interpretable in $G$ as a group through the identity map. 

In this situation when we say that $H$ is e-definable in $G$ we tacitly understand that $H$ is also e-interpretable in $G$ as a group.
\end{con}
\begin{example}
Let $G$ be a group with  finite $[x,y]$-width $n$ (see below in this section). Then any $g\in G'$ can be written as a product of exactly $n$ commutators (adding trivial ones if necessary), and thus $G'$ is e-definable in $G$ by means of the equation $$x=\left[x_1,y_1\right]\cdots\left[x_n,y_n\right]\left[x_{n+1},y_{n+1}\right]^{-1}\cdots\left[x_{2n},y_{2n}\right]^{-1}.$$  

Further examples can be found in Proposition \ref{extra_prop2}, and in the \emph{Verbal width} subsection below. 
\end{example}

The following is a fundamental property of e-interpretability. Intuitively it states that if $\mc{A}$ is e-interpretable in $\mc{M}$, then any system of equations in $\mc{A}$ can be `encoded' as a system of equations in $\mc{M}$.
\begin{lemma}
\label{RedLemma}
Let  $\mathcal{A}$ be e-interpretable in $\mathcal{M}$ with  the interpreting map  $\phi: D \to \mathcal{A}$ (in the notation of the Definition \ref{d: e-int}). Then for every system of equations $S(\mathbf{x})$ in $\mathcal{A}$, there exists a system of equations $S^*(\mathbf{y}, \mathbf{z})$ in $\mathcal{M}$, such that if $\mathbf{b}, \mathbf{c}$ is a solution to $S^*(\mathbf{y}, \mathbf{z})$ in $\mathcal{M}$ then $\phi(\mathbf{b})$ is a solution to $S(\mathbf{x})$ in $\mathcal{A}$. Moreover, any solution $\mathbf{a}$ to $S(\mathbf{x})$ in $\mathcal{A}$ arises in this way, i.e.\  $\mathbf{a} = \phi(\mathbf{b})$ for some solution $(\mathbf{b}, \mathbf{c})$ to $S^*(\mathbf{y}, \mathbf{z})$ in $\mathcal{M}$. Furthermore, there is a polynomial time algorithm that constructs the system $S^*(\mathbf{y}, \mathbf{z})$ when given a system $S(\mathbf{x})$. 
\end{lemma}
\begin{proof}
It suffices to follow step by step the proof of Theorem 5.3.2 from \cite{Hodges}, which states that the above holds when $\mc{A}$ is interpretable by first order formulas in $\mc{M}$. 
\end{proof}
 Now we state two key consequences of Lemma \ref{RedLemma}. 
\begin{cor}\label{RedCor}
If $\mathcal{A}$ is e-interpretable in $\mathcal{M}$, then $\mc{D}(\mc{A})$ is reducible to $\mc{D}(\mc{M})$. Consequently, if $\mc{D}(\mathcal{A})$  is undecidable, then $\mc{D}(\mathcal{M})$ is undecidable as well.
\end{cor}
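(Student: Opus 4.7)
The plan is to derive the corollary directly from Lemma \ref{RedLemma}, with the only substantive point being that the translation $S \mapsto S^*$ produced there is \emph{algorithmic}, not merely existential. Since $\mc{A}$ is e-definable in $\mc{M}$ with defining map $\phi: A \hookrightarrow M^k$, we are handed once and for all a finite system $\Sigma_\phi(\mathbf{y}, \mathbf{z})$ of equations cutting out the image $\phi(A) \subseteq M^k$, together with, for each basic operation $f$ of $\mc{A}$, a finite system $\Sigma_f$ over $\mc{M}$ defining the graph of $\phi(f)$. All of this data is fixed, finite, and part of the input to the reduction.

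First I would spell out the translation. Given a finite system $S(x_1, \dots, x_n)$ of equations over $\mc{A}$, replace each variable $x_i$ by a fresh block $\mathbf{y}_i = (y_{i,1}, \dots, y_{i,k})$ of variables over $\mc{M}$, and adjoin a copy of $\Sigma_\phi(\mathbf{y}_i, \mathbf{z}_i)$ for each $i$, which forces $\mathbf{y}_i$ to lie in $\phi(A)$. Then walk through each equation of $S$ recursively on term complexity: for every application of an operation $f$ appearing in $S$, introduce fresh auxiliary variables in $\mc{M}$ to name the values of $\phi(f)$ applied to the blocks representing its arguments, and append the corresponding copy of $\Sigma_f$ to constrain those auxiliaries. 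Finally, translate the outer equality $t_1 = t_2$ of $\mc{A}$ into componentwise equality of the $k$-tuples in $\mc{M}$ representing $t_1$ and $t_2$. The resulting system $S^*(\mathbf{y}, \mathbf{z})$ over $\mc{M}$ is finite, computable from $S$, and by Lemma \ref{RedLemma} has a solution in $\mc{M}$ if and only if $S$ has a solution in $\mc{A}$. This is the required m-reduction $\mc{D}(\mc{A}) \leq_m \mc{D}(\mc{M})$.

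The second sentence is then purely formal: if $\mc{D}(\mc{M})$ admitted a decision algorithm $\mathcal{B}$, composing the computable map $S \mapsto S^*$ with $\mathcal{B}$ would decide $\mc{D}(\mc{A})$, contradicting undecidability. The main (and only) obstacle is the mechanical verification that the recursive unfolding of terms introduces only finitely many auxiliary variables and equation-copies per input equation, which is immediate since each $\Sigma_\phi$ and $\Sigma_f$ is itself finite and the term-depth of $S$ is bounded by the input size. No further mathematical content is needed beyond Lemma \ref{RedLemma}.
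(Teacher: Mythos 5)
Your proof is correct and follows the same route as the paper, which simply observes that the corollary is immediate from Lemma \ref{RedLemma}. You usefully make explicit the point the paper leaves implicit, namely that the map $S \mapsto S^*$ is effectively computable from the fixed defining systems, which is exactly what turns the equivalence of Lemma \ref{RedLemma} into an m-reduction.
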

\begin{cor}\label{transitivity}
E-interpetability is a transitive relation. I.e.\ if $\mc{A}_1$ is e-intepretable in $\mc{A}_2$, and $\mc{A}_2$ is e-interpretable in $\mc{A}_3$, then $\mc{A}_1$ is e-interpretable in $\mc{A}_3$.
\end{cor}

The next result will prove valuable later on.
\begin{prop}\label{extra_prop1}
Let $H$ be a normal subgroup of a group $G$. If  $H$ is e-definable in $G$ then the natural map  $\pi: G \to G/H$ is an e-interpretation of $G/H$ in $G$.   Consequently, $\mc{D}(G/H)$ is reducible to $\mc{D}(G)$. 
\end{prop}
\begin{proof}
Solving an  equation $w(\mathbf{z}) = 1$ on $n$ variables $\mathbf{z}$  in  $G/H$   is equivalent to  finding $n$ elements $\mathbf{g} \in G^n$ such that $w(\mathbf{g}) \in H$. Since $H$ is e-definable in $G$, there exists a system of equations $S_w=S_w(\mathbf{x}_w, \mathbf{y}_w)$ in $G$ such that $w(\mathbf{g}) \in H$ if and only if $S_w(\mathbf{g}, \mathbf{y}_w)$  has a solution $\mathbf{y}_w$ in $G$ (of course, here $\mb{x}_w$ is a tuple of $n$ variables, each one taking values in $G$). 

Now let $w_{\odot}$ and $w_{=}$ be the equations $xy H = zH$ and $xH=yH$ in $G/H$, respectively. Then the preimage of the graph of the multiplication operation of $G/H$ is e-definable in $G$ by means of  the system of equations $S_{w_{\odot}}$. Similarly, the preimage of the graph of the equality relation in $G/H$ is e-definable in $G$ by means of $S_{w_{=}}$. In conclusion, $G/H$ is e-interpretable in $G$. The last statement of the proposition follows directly from Corollary \ref{RedCor}. 
\end{proof}

We next use Proposition \ref{extra_prop1} to study  the torsion subgroup of a nilpotent group, "equation-wise".

\begin{prop}\label{extra_prop2}
Let $T$ be the torsion subgroup of a finitely generated nilpotent group $G$. Then $T$ is e-definable in $G$, and, consequently, $\mc{D}(G/T)$ is reducible to $\mc{D}(G)$.
\end{prop}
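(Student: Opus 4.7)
The plan is to exploit the classical fact that in a finitely generated nilpotent group the torsion elements form a finite normal subgroup. I would cite (or briefly recall) that in such a $G$ the set $T$ of torsion elements is a finite characteristic subgroup. Granting this, $T$ has some finite exponent $N := \exp(T)$, so every element of $T$ satisfies $x^N = 1$. Conversely, any $x \in G$ with $x^N = 1$ has finite order and therefore lies in $T$. Hence $T = \{x \in G \mid x^N = 1\}$ is precisely the solution set in $G$ of the single equation $x^N = 1$, which exhibits $T$ as an e-definable subset of $G$.

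Once e-definability of $T$ is established, the reduction $\mathcal{D}(G/T)\leq \mathcal{D}(G)$ follows immediately from Proposition \ref{extra_prop1} applied with the normal subgroup $T$ and the ambient group $H = G$ (noting that $G$ itself is trivially e-definable in $G$ via the equation $x = x$). This yields the second half of the statement.

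The only subtlety worth flagging is that we are using the \emph{existence} of the finite exponent $N$, not an effective bound on it; the system of equations $x^N = 1$ is a fixed object associated to $G$, which is all that e-definability requires. Correspondingly, the main external input is the structural fact that in a finitely generated nilpotent group all torsion elements form a finite subgroup. I would either refer to the standard reference (e.g.\ \cite{Segal}, already cited in Lemma \ref{SegalCor2}) or give a quick induction on the nilpotency class: $T \cap Z(G)$ is a finitely generated abelian torsion group, hence finite; passing to $G/(T\cap Z(G))$, which is still finitely generated nilpotent with strictly smaller class, one concludes by induction that the image of $T$ is finite, and therefore $T$ itself is finite. This is the only non-routine ingredient, and it is genuinely classical rather than an obstacle.
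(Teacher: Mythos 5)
Your proof is correct and follows essentially the same approach as the paper: both e-define $T$ by a single equation $x^N=1$ (the paper takes $N=|T|$, you take $N=\exp(T)$, a cosmetic difference) and then apply Proposition \ref{extra_prop1} with $H=G$. The additional inductive justification that $T$ is finite is a nice supplement but not a departure; the paper simply cites \cite{Segal}.
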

\begin{proof}
The torsion subgroup of such $G$ has finite order, say $n$ (\cite{Segal}, Chapter 1, Corollary 10). Hence, the equation $x^n=1$   e-defines $T$ in $G$. The result now follows from Proposition \ref{extra_prop1}.
\end{proof}

\subsection{Verbal width}\label{verbal_width} 
Let $w=w(x_1, \dots, x_m)$ be a word in  an alphabet  $\{x_1, \dots, x_m\}^{\pm 1}$. The \emph{$w$-verbal subgroup} $w(G)$ of a group $G$ is the subgroup  $ \langle w(g_1,\ldots,g_m)\mid g_i\in G \rangle$.  $G$ is said to have \emph{finite $w$-width} if there exists an integer $n$, such that every  $g \in w(G)$ can be expressed as a product of at most $n$ elements of the form $w(g_1,\ldots,g_m)^{\pm 1}$, i.e., if, for all $g\in w(G)$,
$$
g= \prod_{i=1}^{n'} w(g_{1}^i, \dots, g_m^i)^{\epsilon_i} \ \hbox{for some} \ g_j^i \in G, \epsilon_i\in \{-1, +1\}, \ \hbox{and} \ n'\leq n.
$$  
In this case, each $g\in w(G)$ can be expressed as a product of \emph{exactly} $n$ elements of the form $w(g_1, \dots, g_m)^{\pm 1}$ (adding trivial elements $w(1, \dots, 1)$ if necessary). Hence, $w(G)$ is e-definable  in $G$ by  the following equation
$$
x=\prod_{i=1}^n (w(y_{i1}, \dots, y_{im}) w(z_{i1}, \dots, z_{im})^{-1})
$$
on variables $x$ and $\{y_{ij}, z_{ij} \mid 1\leq i\leq n, \ 1\leq j \leq m\}$.

For example, if $G$ is a finitely generated nilpotent group, then $\gamma_i(G)$ (the $i$-th  subgroup of the lower central series of $G$) is $w_i$-verbal for $w_i = [x_1, \dots, x_{i}]$. In this case $G$ has finite $w_i$-verbal width for any  $i \in \mathbb{N} \smallsetminus \{0\}$ \cite{Romankov10} \cite{Segal10}. In particular, $\gamma_i(G)$ is e-definable in $G$.

\begin{prop}\label{any_step}
Let $G$ be a finitely generated nilpotent group with lower central series $G=\gamma_1(G) \unrhd \gamma_2(G) \unrhd \ldots \unrhd \gamma_c(G) \unrhd \gamma_{c+1}(G)=1$. Denote by $H_{ij}$ the quotient group $\gamma_i(G)/\gamma_j(G)$,  $1\leq i\leq j\leq s+1$. Then $H_{ij}$ is e-interpretable in $G$, and consequently  $\mc{D}(H_{ij})$ is reducible to $\mc{D}(G)$, for all $i\leq j$.
\end{prop}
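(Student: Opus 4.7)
The plan is to reduce the statement to the machinery already in place, namely Proposition \ref{extra_prop1} together with the fact that every term of the lower central series is e-definable in $G$. First I would set $H = G_i$ and let $N$ be the full preimage in $G_i$ of the torsion subgroup $T_{ij}$ of $G_i/G_j$ under the canonical projection $\pi: G_i \to G_i/G_j$. By the third isomorphism theorem one has $H_{ij} = (G_i/G_j)/T_{ij} \cong G_i/N$, so it suffices to exhibit $G_i$ and $N$ as e-definable subgroups of $G$ with $N$ normal in $G$, and then invoke Proposition \ref{extra_prop1}.

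The inclusions $N \trianglelefteq G_i \leq G$ are clear, and $G_i$ is normal in $G$ since it is a term of the lower central series. For the normality of $N$ in $G$, observe that $G_j$ is normal in $G$, so conjugation by any $g \in G$ descends to an automorphism of $G_i/G_j$; since the torsion subgroup $T_{ij}$ is characteristic in $G_i/G_j$, it is preserved by these automorphisms, hence its preimage $N$ is preserved by conjugation by $G$.

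Next I would verify that both $G_i$ and $N$ are e-definable in $G$. As noted in the \emph{verbal width} subsection, every term $G_k$ of the lower central series of a finitely generated nilpotent group is a verbal subgroup of finite width, hence e-definable in $G$; this handles both $G_i$ and $G_j$. For $N$, the key observation is that $G_i/G_j$ is a finitely generated nilpotent group, so by Segal's theorem (already used in the proof of Proposition \ref{extra_prop2}) its torsion subgroup $T_{ij}$ is finite, of some fixed exponent $n$. Consequently
\[
N = \{g \in G_i \mid g^n \in G_j\},
\]
and this set is e-defined in $G$ by conjoining the e-defining system for $G_i$ applied to $x$ with the e-defining system for $G_j$ applied to $x^n$.

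Finally, Proposition \ref{extra_prop1} applied to $N \trianglelefteq G_i \leq G$ yields a reduction of $\mc{D}(G_i/N) = \mc{D}(H_{ij})$ to $\mc{D}(G)$, completing the argument. I do not anticipate a genuine obstacle; the only subtlety is ensuring a \emph{uniform} bound $n$ for the torsion condition (without it, $N$ would only be definable by an infinite disjunction of equations), and this is precisely what the finiteness of torsion in finitely generated nilpotent groups provides.
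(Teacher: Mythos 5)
Your proof is correct, but it takes a slightly different route from the paper's. The paper's proof is a two-step reduction via transitivity: first it applies Proposition~\ref{extra_prop1} to the chain $G_j \trianglelefteq G_i \leq G$ to reduce $\mc{D}(G_i/G_j)$ to $\mc{D}(G)$, and then it invokes Proposition~\ref{extra_prop2} (which e-defines the torsion subgroup of any finitely generated nilpotent group by a single equation $x^n=1$) to reduce $\mc{D}(H_{ij})$ to $\mc{D}(G_i/G_j)$. You instead collapse this into a single application of Proposition~\ref{extra_prop1} by directly e-defining in $G$ the preimage $N = \{g \in G_i \mid g^n \in G_j\}$ of the torsion subgroup of $G_i/G_j$, where $n$ is a uniform torsion exponent, and then reducing $\mc{D}(G_i/N) \cong \mc{D}(H_{ij})$ to $\mc{D}(G)$. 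Both arguments rest on the same two ingredients (finite verbal width of the $G_k$ and finiteness of torsion in finitely generated nilpotent groups), so the content is essentially equivalent; yours is marginally more self-contained in that it does not rely on Proposition~\ref{extra_prop2} as a black box, while the paper's is slightly more modular. Your normality argument for $N$ (torsion is characteristic, so its preimage under the $G$-equivariant quotient map is $G$-invariant) is correct and a necessary detail that the two-step version bypasses.
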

\begin{proof}
Since $\gamma_k(G)$ is e-definable and normal in $G$ for all $k$, we have that $H_{ij}$ is e-interpretable in $G$, and  the Diophantine problem
$\mc{D}(H_{ij})$ is reducible to $\mc{D}(G)$ for all $i\leq j$, by Proposition \ref{extra_prop1}.
\end{proof}
This proposition allows one to reduce $\mc{D}(H_{1,3})$ to $\mc{D}(G)$. This is of particular interest because $H_{1,3}$ is a finitely generated  2-step nilpotent group, possibly non-abelian.  There are other $H_{ij}$ with these characteristics, for example $H_{i,3i}$, for any $i$.

\section{Diophantine problem in nilpotent groups}\label{section: Diophantine_problem}

In this section we introduce the notion of a largest ring of scalars of a $2$-step nilpotent group $G$. This  ring is canonically associated to $G$ and it is  e-interpretable in $G$. We prove that if $G$ contains a  so-called centralizer-small element  then such ring is isomorphic to $\mbb{Z}$, hence obtaining undecidability of the Diophantine problem in $G$.

A ring of scalars  of a finitely generated $2$-step nilpotent group $G$ is a commutative, associative  ring $R$ with identity such that $R$ acts faithfully by endomorphisms  on $G/Z(G)$ and on $G'$, and such that the commutator map between abelian groups%
\begin{align}
[\cdot,\cdot]: G/Z(G) \times G/Z(G) &\rightarrow G' \nonumber\\ 
\left(gZ(G), hZ(G)\right) &\mapsto [g,h] \nonumber
\end{align}
is $R$-bilinear (with respect to these actions). It follows from this definition that any ring of scalars $R$ of $G$ embeds in $End(G/Z(G))$ and in $End(G')$. From this point on we always identify $R$ with its image in $End(G/Z(G))$.

A ring of scalars  $R$ of a 2-nilpotent group $G$ is called a \emph{largest} ring of scalars of $G$ if $K\leq R \leq End(G/Z(G))$ for any other ring of scalars of $G$.  We note that these notions can be formulated for arbitrary nilpotent groups (not necessarily of class $2$). However, since we will not need these in this paper we refer the reader to \cite{Mi_So_2, Mi_So_1} for the general case. 

Our strategy in studying the Diophantine problems in  nilpotent groups $G$ of an arbitrary  nilpotency class $c \geq 2$ is to apply  the largest rings of scalars to the Diophantine problems in  2-step nilpotent groups and then use the result (Proposition \ref{any_step}) that the 2-step nilpotent group $G/\gamma_3(G)$ is e-interpretable in $G$.

\begin{fact}[\cite{GMO1}]\label{t: e-interpret_ring_of_scalars}
The largest ring of scalars of a finitely generated $2$-step nilpotent group $G$ is e-interpretable in $G$.
\end{fact}

We will need the following key concept before applying Theorem \ref{t: e-interpret_ring_of_scalars}. Recall that  $C_G(g)$ denotes the centralizer of $g$ in $G$, i.e., $C_G(g) = \{x \in G \mid xg=gx\}$. 

\begin{dfn}\label{generalPosDfn}
An element $g$ of a group $G$ is said to be   \emph{centralizer-small} (c-small) if $C_G(g)/Z(G)$ is infinite cyclic generated by $gZ(G)$. 
\end{dfn}
Observe, that if $g$ is c-small in $G$ then $C_G(g)=\{ g^t z \ | \ t\in \mbb{Z}, \ z \in Z(G)\},$

\begin{remark}\label{r: exists_b}
Let $G$ be a finitely generated $2$-nilpotent group and let $a\in G$ be a centralizer-small element. Then there exists an element $b \in G$ such that $[a,b]$ has infinite order.
\end{remark}
\begin{proof}
Let $b_1, \dots, b_m$ be a generating set of $G$, and assume that $[a, b_i]$ has finite order $n_i$ for all $i=1,\dots, m$ (otherwise we are done). Writing $n=n_1 \dots n_m$ we have $[a, b_i]^n=1$ for all $i$, which implies that $[a, g]^n=[a^n, g]=1$ for all $g\in G$, and so $a^n \in Z(G)$, contradicting the definition of $c$-small elements.
\end{proof}

\begin{thm}\label{CSmallMaxZ}
Let $G$ be a finitely generated $2$-step nilpotent group having a centralizer-small element.  Then the largest ring of scalars  of $G$ is isomorphic to the ring of integers $\mathbb{Z}$.
\end{thm}
\begin{proof}
Let $a\in G$ be a c-small element and let $b\in G$ be such that $[a,b]$ has infinite order (see Remark \ref{r: exists_b}). Let $\ \bar{{}} \ $ denote the natural projection from $G$ to  $G/Z(G)$.  Suppose $R$ is a ring of scalars of $G$, and fix an element $r\in R$. For $g\in G$ denote by  $g_r$ an arbitrary element in $G$ such that $\bar{g_r}=r\bar{g}$. Following  our notation, let $a_r$ and $b_r$ be elements from $G$ such that $\bar{a_r} = r \bar{a}$ and $\bar{b_r} = r \bar{b}$. 
Then
$$
[a, a_r] = [\bar{a}, r \bar{a}]= r[\bar{a} , \bar{a}]= 1.
$$
Since $a$ is centralizer-small, $a_r=a^{t_{r,a}} z_a$ for some unique $t_{r,a} \in \mbb{Z}$ and some $z_{r,a} \in Z(G)$.  Let $\phi: R \to \mathbb{Z}$  be the map $\phi(r)=t_{r,a}$. We claim that $\phi$ is an isomorphism.

We next show that $r\bar{g}=\bar{g}^{t_{r,a}}$ for all  $g\in G$.  Indeed, taking any  $g\in G$,
\begin{equation}\label{e: 29march}
[g^{t_{r,a}}, a]=[g, a^{t_{r,a}}] = [\bar{g}, r\bar{a}] = [r\bar{g}, \bar{a}] = [ g_r, a].
\end{equation}
Hence $a$ commutes with $g_r g^{-t_{r,a}}$, and since $a$ is centralizer-small we obtain $g_r = g^{t_{r,a}} a^{s_{r,g}} z_{r,g}$ for some integer $s_{r,g}$ and some $z_{r,g}\in Z(G)$. In particular, $b_r = b^{t_{r,a}}a^{s_{r,b}} z_{r,b}$. We obtain $$[b,a]^{s_{r,b}}=[b, b_rb^{-t_{r,a}}] = [b, b_r] = [\bar{b}, r\bar{b}]=r[\bar{b}, \bar{b}]=1,$$ hence $s_{r,b}=0$ and $b_r = b^{t_{r,a}}z_{b,r}$ for some $z_{b,r}\in Z(G)$.
Now, for an arbitrary $g\in G$,
$$
[g_r, b]= [r \bar{g}, \bar{b}] = [ \bar{g}, r\bar{b}] = [ \bar{g}, \bar{b}^{t_{r,a}}]  = [g^{t_{r,a}}, b]
$$
from where we obtain $[g_r^{-1} g^{t_{r,a}}, b] = 1$, but  since  $g_r = g^{t_{r,a}} a^{s_{r,g}} z_{r,g}$, it follows that
$$
[a^{-s_{r,g}}, b] = [a,b]^{-s_{r,g}} = 1,
$$
which implies $s_{r,g}=0$. Thus $r \bar{g} = \bar{g}^{t_{r,a}}$ for all $g\in G$. From now on we denote $t_{r,a}$ simply by $t_r$.

 We next show that $\phi$ is a ring homomorphism. To clarify our arguments we will write $\oplus$, $\odot $, $1_R$ and $0_R$ when referring to  addition and multiplication in $R$, and to its identity and zero element, respectively. Recall, that $R$ is viewed as a subring of $\End(G/Z(G))$. Hence $\oplus$ is addition of endomorphisms, $\odot$ is composition, $1_R$ is the identity endomorphism, and $0_R$ is the endomorphism that sends all elements to the identity element of $G/Z(G)$.  For all other structures we use standard notation. 
 
 First note that $$\bar{a}^{t_{r_1 \oplus  r_2}}=(r_1\oplus  r_2)\bar{a}=\left(r_1\bar{a}\right)\left(r_2\bar{a}\right)= \bar{a}^{t_{r_1}} \bar{a}^{t_{r_2}}=\bar{a}^{t_{r_1}+t_{r_2}}.$$ Since $\bar{a}$ has infinite order in $G/Z(G)$,  one has 
 $$
 \phi(r_1\oplus  r_2)=t_{r_1\oplus  r_2}=t_{r_1} + t_{r_2}=\phi(r_1)+\phi(r_2). 
 $$
 Similarly with the multiplication operation,
$$
	\bar{a}^{t_{r_1\odot  r_2}}= (r_1\odot  r_2)\bar{a}=r_1(r_2(\bar{a}))=r_1\left(\bar{a}^{t_{r_2}}\right)=\bar{a}^{t_{r_1}t_{r_2}}.
$$
 Again,  since $\bar{a}$ has infinite order in $G/Z(G)$, 
 $$
 \phi(r_1\odot  r_2)=t_{r_1\odot  r_2}=t_{r_1}t_{r_2}=\phi(r_1)\phi(r_2).
 $$

 Notice also that $\bar{a}^{(t_{1_R})}= (1_R) \bar{a} = \bar{a}$, because since $R$ is (identified with) a subring of $\End(G/Z(G))$, the identity element of $R$ is the identity endomorphism of $G/Z(G)$. This implies that $\phi(1_R)=t_{1_R}=1$, completing the proof that $\phi$ is a ring homomorphism.

Moreover, $\phi$ is surjective, because given $k\in \mathbb{Z}$, we have 
$$\phi\left(\sum_{i=1}^k 1_R \right) = \sum_{i=1}^ k \phi(1_R) = k.$$ 
Finally, notice that if $\phi(r)=t_r=0$ for some $r$, then $r\bar{g}= \bar{1}$ for all $g\in G$. Since $R$ acts faithfully on $G/Z(G)$   $r = 0$ in $R$, i.e., \ $r=0_R$.  We conclude that $\phi$ is a ring isomorphism. This proves that any non-zero ring of scalars of $G$  is isomorphic to $\mathbb{Z}$. In particular, this is true of the largest ring of scalars of $G$.
\end{proof}

The following result is an immediate consequence of the previous two theorems and of Corollaries \ref{RedCor} and \ref{transitivity}.

\begin{thm}\label{mainThmEq2}
    Let $G$ be a finitely generated $2$-step nilpotent group  with a centralizer-small element.  Then the ring $\mathbb{Z}$ is e-interpretable in $G$, and the Diophantine problem in $G$ is undecidable.  
\end{thm}

We recover one  of the results from \cite{Duchin2}:

\begin{cor}\label{undecinfreenilp}
The ring $\mbb{Z}$ is  e-interpretable in any non-abelian free nilpotent group $N$. Consequently, $\mc{D}(\mbb{Z})$ is reducible to $\mc{D}(N)$, and $\mc{D}(N)$ is undecidable.
\end{cor}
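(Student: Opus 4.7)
The plan is to reduce everything to the case of a free 2-step nilpotent group of rank $\geq 2$, where Theorem \ref{mainThmEq2} applies directly. Let $N = N_{s,m}$ be non-abelian, so $s\geq 2$ and $m\geq 2$. Since free nilpotent groups are torsion-free, the term $N_3$ is an isolated subgroup of $N$, and $N/N_3 \cong N_{2,m}$ is torsion-free; thus the group $H_{1,3}$ from Proposition \ref{any_step} is exactly $N_{2,m}$ (in particular, $H_{1,3}=N$ when $s=2$). So it suffices to work inside $N_{2,m}$.

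Next I would verify that the basic generators $a_1$ and $a_2$ are non-commuting c-small elements of $N_{2,m}$. Using the Malcev representation \eqref{malcev_rep} and the bilinearity of $[,]$ in a 2-step nilpotent group, if $g = a_1^{\alpha_1}\cdots a_m^{\alpha_m}\prod_{i<j}[a_i,a_j]^{\gamma_{ij}}$, then
\[
[g, a_k] \;=\; \prod_{j\neq k}[a_j, a_k]^{\alpha_j}.
\]
By uniqueness of the Malcev form, $[g,a_k]=1$ forces $\alpha_j=0$ for all $j\neq k$. The same argument applied to each $a_k$ shows $Z(N_{2,m})=N_{2,m}'$. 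Hence
\[
C_{N_{2,m}}(a_k) \;=\; \bigl\{\, a_k^{t} z \;\big|\; t\in\mathbb{Z},\ z\in Z(N_{2,m})\,\bigr\},
\]
so each $a_k$ is c-small, and $a_1,a_2$ do not commute.

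Theorem \ref{mainThmEq2} now yields that $\mathbb{Z}$ is e-definable in $H_{1,3}=N_{2,m}$, and that $\mc{D}(\mathbb{Z})$ is reducible to $\mc{D}(N_{2,m})$. Composing with the fact noted after Proposition \ref{extra_prop1} that the quotient $N/N_3$ is e-interpretable in $N$ (and that $N_3$ is e-definable and normal in $N$, so torsion can be factored out as in Proposition \ref{extra_prop2}) gives that $\mathbb{Z}$ is e-interpretable in $N$. On the Diophantine side, Proposition \ref{any_step} gives a reduction $\mc{D}(H_{1,3})\leq \mc{D}(N)$, so by transitivity $\mc{D}(\mathbb{Z})\leq \mc{D}(N)$, and undecidability of $\mc{D}(N)$ follows from Matiyasevich's theorem.

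The only calculation requiring care is the c-smallness of $a_1,a_2$, which rests on the Malcev-basis computation above; the rest of the argument is bookkeeping that stacks previously established machinery (Theorem \ref{mainThmEq2}, Remark \ref{any_step_remark}, and Proposition \ref{any_step}). No genuine obstacle is expected.
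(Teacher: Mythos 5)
Your proposal is correct and follows the same route as the paper: reduce to the free $2$-step nilpotent quotient $N/N_3$, verify that $a_1,a_2$ are non-commuting c-small elements via Malcev coordinates and bilinearity of the commutator, apply Theorem~\ref{mainThmEq2}, and lift back to $N$ using the e-interpretability of $N/N_3$ in $N$ and transitivity. The only difference is presentational: you write out the Malcev-basis computation establishing $Z(N_{2,m})=N_{2,m}'$ and c-smallness of $a_k$, whereas the paper simply cites \cite{Duchin2} for that step.
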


\begin{proof}
Suppose first that $N$ is a non-abelian $2$-step free nilpotent group freely generated by $A=\{ a_1, \dots, a_n\}$. Let $C=\{[a_i, a_j] \mid 1\leq i< j \leq n\}$. Using that $(A; C)$ is a Malcev basis of $N$ and `bilinearity' of $[\cdot, \cdot]$ (see \eqref{e: multilinear}) one obtains that $a_1$ is a centralizer-small element of $N$. Hence the ring $\mbb{Z}$ is  e-interpretable in $N$,  by  Theorem \ref{mainThmEq2}.

Now assume that $N$ is $c$-step nilpotent for some $c>2$. Then $N/\gamma_3(N)$ is a non-abelian $2$-step free nilpotent group, and by the previous paragraph, the ring $\mbb{Z}$ is  e-interpretable in $N/\gamma_3(N)$. By Proposition \ref{any_step}, $N/\gamma_3(N)$ is  e-interpretable in $N$. Finally, since e-interpretability is a transitive property, 
$\mbb{Z}$ is  e-interpretable in $N$. Hence, by   Corollary \ref{RedCor}, $\mc{D}(\mbb{Z})$ is reducible to $\mc{D}(N)$. This makes $\mc{D}(N)$ undecidable.
\end{proof}

We finish this section by providing another application of the largest ring of scalars regarding direct decomposability. We note that this result will not be used further in the text.
\begin{prop}\label{dirIndec}
Suppose $\mathbb{Z}$ is the largest ring of scalars  of a finitely generated  torsion-free $2$-step nilpotent group $G$. Then $G$ cannot be decomposed into a direct product of  non-abelian subgroups. 
\end{prop}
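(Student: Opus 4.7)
The plan is to argue by contradiction. Suppose $G = H_1 \times H_2$ with both $H_i$ non-abelian. Since subgroups of finitely generated torsion-free nilpotent groups are themselves finitely generated torsion-free nilpotent, each $H_i$ is a $\tau_2$-group. The standard identities for direct products give $Z(G) = Z(H_1) \times Z(H_2)$, $G' = H_1' \times H_2'$, and hence a canonical isomorphism $G/Z(G) \cong H_1/Z(H_1) \times H_2/Z(H_2)$. I will construct a ring of scalars of $G$ isomorphic to $\mathbb{Z} \times \mathbb{Z}$; since $\mathbb{Z}\times\mathbb{Z}$ has zero divisors, it admits no (unital) ring embedding into $\mathbb{Z}$, which contradicts the hypothesis that $\mathbb{Z}$ is the \emph{largest} ring of scalars.

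To build this action, for each $(n_1,n_2)\in\mathbb{Z}\times\mathbb{Z}$, define an endomorphism of $G/Z(G)$ by sending the coset represented by $h_1h_2$ (with $h_i\in H_i$) to that of $h_1^{n_1}h_2^{n_2}$, and an endomorphism of $G'$ by sending $c_1c_2$ (with $c_i\in H_i'$) to $c_1^{n_1}c_2^{n_2}$. Well-definedness on $G/Z(G)$ uses $Z(G)=Z(H_1)\times Z(H_2)$ together with the fact that each $H_i/Z(H_i)$ is abelian; additivity and multiplicativity of the $\mathbb{Z}\times\mathbb{Z}$ action, and the fact that $(1,1)$ acts as the identity, are immediate. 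For bilinearity of the commutator, note that since $H_1$ and $H_2$ commute elementwise one has $[g_1g_2,h_1h_2] = [g_1,h_1][g_2,h_2]$, whence
\[
[(n_1,n_2)\cdot g,\,h] \;=\; [g_1^{n_1},h_1]\,[g_2^{n_2},h_2] \;=\; [g_1,h_1]^{n_1}[g_2,h_2]^{n_2} \;=\; (n_1,n_2)\cdot[g,h],
\]
with the analogous identity in the second slot. Faithfulness on $G/Z(G)$ follows from each $H_i/Z(H_i)$ being nontrivial (because $H_i$ is non-abelian) and torsion-free (standard for torsion-free nilpotent groups); faithfulness on $G'$ follows similarly since $H_i'\neq 1$ and $H_i'$ is torsion-free as a subgroup of $G$. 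Thus any nonzero $(n_1,n_2)$ acts nontrivially on both $G/Z(G)$ and $G'$.

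Combining these verifications exhibits $\mathbb{Z}\times\mathbb{Z}$ as a ring of scalars of $G$, contradicting the maximality of $\mathbb{Z}$ and completing the proof. The main subtlety I anticipate is keeping track of the two separate abelian groups on which the scalars act, so that faithfulness and bilinearity must each be checked twice and then shown to be compatible under the commutator pairing; however, all of this reduces to the routine bilinear identities for commutators in a 2-step nilpotent group together with the torsion-freeness of $G$ and its natural quotients.
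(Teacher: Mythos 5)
Your proof is correct and takes essentially the same approach as the paper: decompose $Z(G)$, $G/Z(G)$, and $G'$ as direct products, define a $\mathbb{Z}\times\mathbb{Z}$ action by component-wise exponentiation, verify faithfulness and bilinearity using torsion-freeness of $H_i/Z(H_i)$ and $H_i'$ and non-abelianness of the factors, and derive a contradiction with $\mathbb{Z}$ being the largest ring of scalars. The only cosmetic difference is that you make explicit why $\mathbb{Z}\times\mathbb{Z}$ cannot embed in $\mathbb{Z}$ (zero divisors), which the paper leaves implicit.
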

\begin{proof}
Suppose to the contrary that $G= H \times K$ for some  non-abelian subgroups $H,K$ of $G$. Then $H$ and $K$ are non-abelian finitely generated torsion-free $2$-step nilpotent groups. It is immediate to verify that $Z(G)=Z(H)\times Z(K)$,  that  $G/Z(G) \cong  H/Z(H) \times K/Z(K)$, and that  $G'=H' \times K'$.  

Consider the natural ring actions  of $\mathbb{Z}^2$ on $H/Z(H) \times K/Z(K)$ and on $H' \times K'$ defined by component-wise exponentiation (or component-wise multiplication if one is using additive notation):
\begin{equation}\label{123}
(r_1, r_2)(h, k)= (h^{r_1}, k^{r_2}),
\end{equation}
for  $(r_1, r_2)\in \mathbb{Z}^2$ and  $(h, k)$ in $ H/Z(H) \times K/Z(K)$ or in $H' \times K'$. Note that, by fixing a tuple $(r_1, r_2)$ in \eqref{123} we obtain an endomorphism of $H/Z(H) \times K/Z(K)$ (or $H'\times K'$).  

We next show that these actions are faithful. Suppose $$(r_1,r_2)(hZ(H),kZ(K))=(Z(H),Z(K))$$ for all $h, k\in H, K$.  Then $h^{r_1} \in Z(H)$ and $k^{r_2}\in Z(K)$ for all $h,k\in H,K$. Since both $H$ and $K$ are torsion-free and $2$-step nilpotent, $H/Z(H)$ and $K/Z(K)$ are free abelian, and so either $H=Z(H)$ or $r_1=0$, and similarly for $K$ and $r_2$. Since $H$ and $K$ are non-abelian, $r_1=r_2=0$. This shows that the action \eqref{123} is faithful, which implies  there exists an embedding of rings $\mbb{Z}^2 \hookrightarrow \End\left(H/Z(H) \times K/Z(K)\right)$.  Similar arguments show that $\mbb{Z}^2$ also embeds  (as a ring) into $\End(H' \times K')$.

Moreover,
$$
[(r_1, r_2)(u_1, u_2), (v_1, v_2)]=[(u_1^{r_1}, u_2^{r_2}), (v_1, v_2)]=\left([u_1^{r_1}, v_1],[u_2^{r_2},v_2]\right) =$$$$=\left([u_1,v_1^{r_1}], [u_2, v_2^{r_2}]\right)=[(u_1, u_2), (r_1, r_2)(v_1, v_2)].
$$
For all $(r_1, r_2) \in \mbb{Z}^{2}$ and all $(u_1, u_2), (v_1, v_2) \in H/Z(H) \times K/Z(K)$. Similarly:
$$
[(r_1, r_2)(u_1, u_2), (v_1, v_2)]=\left([u_1^{r_1}, v_1],[u_2^{r_2},v_2]\right) =$$$$=\left([u_1,v_1]^{r_1}, [u_2, v_2]^{r_2}\right)=(r_1, r_2)\left( [u_1, v_1], [u_2, v_2]\right) = (r_1, r_2)[(u_1, u_2), (v_1, v_2)].
$$
Hence  $[\mb{r}\mb{u},\mb{v}]=[\mb{u},\mb{r}\mb{v}]=\mb{r}[\mb{u}, \mb{v}]$ for all $\mb{r}\in \mathbb{Z}^2$ and all $\mb{u},\mb{v}\in H/Z(H) \times K/Z(K)$. 
Thus, $\mathbb{Z}^2$ is a ring of scalars of $G=H \times K$. By  definition, $\mathbb{Z}^2$ embeds into the largest ring of scalars of $G$, which is $\mathbb{Z}$ by hypothesis - a contradiction.
\end{proof}

\section{Nilpotent groups given by full rank presentations}

Throughout this section $A = \{a_1, \ldots,a_n\}$,   $R=\{r_1, \dots, r_m\}$ is  a set of $m$ words in the alphabet $A^{\pm 1}$,  and $M(A, R)$ is the relation matrix of the group presentation $\langle A \mid R\rangle_{\mc{N}_c}$. In this section we will study the algebraic structure of groups with presentation 
\begin{equation}\label{e: main_presentation} 
G= \langle A \mid R\rangle_{\mc{N}_c},
\end{equation} 
such that $M(A, R)$ has full rank (i.e. the rank of $M(A,R)$ is equal to $\min(m,n)$). In this case we say that \eqref{e: main_presentation} is a \emph{full rank} presentation.

We adopt the following standard convention regarding the projection of elements onto factor groups: Suppose  $K$ has been obtained from another group $H$ by adding some relations, and let $\pi:H\to K$ be the natural projection of $H$ onto $K$. To avoid constantly referring to $\pi$, we will often speak of  \emph{elements $h$ from $H$ seen in  $K$} (or \emph{projected onto $K$}), rather than of elements $\pi(h)$. Similarly, for $h_1, h_2\in H$, instead of  writing $\pi(h_1) = \pi(h_2)$, we will say that  \emph{$h_1=h_2$ in $K$}.

\subsection{Structure of full rank nilpotent groups}
In this subsection we  describe the structure of full rank nilpotent groups.

We shall say that two subsets $S_1, S_2$ of a free abelian group are \emph{linearly independent} if they generate subgroups that have trivial intersection. Consider a $c$-step nilpotent presentation $\langle A\mid R\rangle_{{\mathcal N}_c}$, and let $A_0=\{a_{i_1}, \dots, a_{i_k}\}$ be a subset of $A$. Let $\pi$ be the projection from the free group freely generated by $A$ onto the free abelian group freely generated by $A$. We say that $A_0$ is \emph{linearly independent} from $R$ (in the abelianization) if the sets $\pi(A_0)$ anf $\pi(R)$ are linearly independent.

\begin{thm}\label{A}
Let $G$ be a finitely generated nilpotent group of class $c \geq 1$ given by a finite full rank presentation $G = \langle A \mid R\rangle_{{\mathcal N}_c} = \langle a_1, \ldots,a_n \mid  r_1, \ldots,r_m \rangle_{{\mathcal N}_c}$.
 Then  there exists a subset $A_0\subseteq A$ with $|A_0| = n-m$ ($A_0=\emptyset$ if $m\geq n$) such that the following holds:
\begin{enumerate}
\item If $m\geq n$, then $G$ is finite.
\item If $m=n-1$, then $\langle A_0\rangle$ is infinite cyclic and has finite index in $G$.%
\item If $m \leq n-2$, then $\langle A_0 \rangle$ is a free $c$-step nilpotent subgroup of rank $n-m$ which has finite index in $G$. 
\end{enumerate}
 Furthermore, $A_0$ can be chosen to be  any subset $\{a_{i_1}, \ldots,a_{i_{n-m}}\}$ of $A$ such that the rank of the matrix $M(A,R)$ coincides with the rank of the matrix obtained from $M(A,R)$ after removing its $i_1, \dots, i_{n-m}$-th columns.

\end{thm}

Before proving Theorem \ref{A} we introduce an auxiliary result:

\begin{lemma}\label{r: independence}
Let $M(A,R)$ be the relation matrix of a presentation $\langle A \mid R\rangle_{\mc{N}_c}$ and  $A_0=\{a_{i_1}, \dots, a_{i_{n-m}}\}$ be some subset of $A$. Then  the subgroups $\langle R\rangle$ and $\langle A_0\rangle$ are linearly independent in $N/N'$ if and only if  the rank of the matrix $M(A,R)$ coincides with the rank of the matrix obtained from $M(A,R)$ after removing its $i_1, \dots, i_{n-m}$-th columns.
\end{lemma}
\begin{proof}
Consider the homomorphism $\theta$ from the free abelian group generated by $A$ to the free abelian group generated by $A\smallsetminus A_0$ that sends all elements of $A_0$ to $1$ and fixes all elements of $A\smallsetminus A_0$. Then $A_0$ is linearly independent from $R$ if and only if $\theta$ is an injection when restricted to the subgroup generated by $R$, which occurs if and only if the subgroup generated by $R$ has the same rank (minimum number of generators) as its image under $\theta$. But the latter holds if and only if the rank of $M(A,R)$ is the same as the rank of the matrix obtained from $M(A,R)$ by removing its $i_1, \dots, i_{n-m}$-th columns (the columns corresponding to the elements $A_0$), and so the lemma follows.
\end{proof}

\begin{proof}[Proof of Theorem \ref{A}]

Assume first that $m\leq n$, in which case the rank of $M(A,R)$ is $m$ by hypothesis. In this case,  there exist $m$ integers $1\leq j_1, \dots, j_m \leq m$ such that the $m\times m$ matrix formed by the columns $j_1, \dots, j_m$ has rank $m$.  Let $i_1, \dots, i_{n-m}$ be the complement of $\{j_1, \dots, j_m\}$ in the set $\{1, 2,\dots, m\}$. Then by Lemma \ref{r: independence} the subset $A_0=\{a_{i_1}, \dots, a_{i_{n-m}}\}$ is linearly independent from $R$. If $n=m$ then we convene $A_0=\emptyset$.   Without loss of generality we can suppose $A_0 = \{a_{m+1},\dots, a_n\}$.   We denote $H=\langle A_0, R\rangle$.

For the next arguments we follow the notation and terminology regarding $\mbb{Q}$-completions which was introduced in Subsection \ref{s: Q_completions}. Let $N=N(A)=\mathcal{N}_c(a_1,\ldots,a_n)$, and let $\phi: N \to \ol{N}$ be its $\mbb{Q}$-completion. 
Since $N$ is torsion-free, we identify $N$ with the  subgroup $\phi(N)\leq \ol{N}$ isomorphic to $N$ (see Property 2 in Fact \ref{p: properties_Q_completions}), hence we will omit the notation $\phi$ from now on. Denote by $\pi$ the natural projection $\pi: N\to N/N'$. As already mentioned, in $N/N'$,
 $\pi(A_0\cup R)$ forms a set of $n$ linearly independent elements, and so $\pi(H)=\pi(\langle A_0\cup R\rangle)$ has finite index in $N/N'\cong \mbb{Z}^n$. Hence $\ol{\pi(H)}=\ol{(N/N')}$ and  $\ol{(N/N')}$ is $\mbb{Q}$-generated by  $\pi(A_0\cup R)$ (Property 1 in Fact \ref{p: properties_Q_completions}), where  again we have identified $N/N'$ with $\ol{(N/N')}$. Denoting by $\rho$ the natural projection of $\ol{N}$ onto its abelianization $\ol{N}/(\ol{N})'$ we have from our notation assumptions and  \cite[Corollary 8.14]{Warfield} that the map $\pi(a_i) \mapsto \rho(a_i)$ induces an isomorphism between $\ol{(N/N)'}$ and $\ol{N}/(\ol{N})'$. Hence $\rho(R\cup A_0)$ $\mbb{Q}$-generates the abelianization of $\ol{N}$.
 In particular $\langle \rho(A_0 \cup R)\rangle_{\mbb{Q}} = \rho(\langle A_0\cup R\rangle_{\mbb{Q}}) = \rho(\ol{H})$ generates (as a group) the abelianization of $\ol{N}$, which means that $\ol{H}$ generates $\ol{N}$ (again as a group ---this is due to the well-known fact that a generating set of the abelianization of a nilpotent group lifts to a generating set of the group). We conclude that $A_0 \cup R$ $\mbb{Q}$-generates $\ol{N}$. 
Since $ |A_0\cup R| = n$ and $\ol{N}$ is a free nilpotent $\mbb{Q}$-group of rank $n$, this generation is  free. 

Now, from Fact  \ref{p: properties_Q_completions} and the above observations we have  \begin{align*}\ol{G}&\cong \ol{(N(A)/\langle\langle R\rangle\rangle)}\cong \ol{N(A)}/\ol{\langle\langle R\rangle\rangle} = \langle A_0 \cup R\rangle_{\mbb{Q}}/\langle\langle R\rangle\rangle_{\mbb{Q}} =\\  &=\ol{\langle A_0 \cup R\rangle} /\ol{\langle \langle R \rangle \rangle}\cong \ol{(\langle A_0 \cup R\rangle /\langle \langle R \rangle \rangle)}\cong \ol{N(A_0)}.\end{align*} Moreover  the resulting isomorphism $\ol{G} \cong \ol{N(A_0)}$ acts as the identity on the set $A_0$, and so $\ol{G}$ is a free nilpotent $\mbb{Q}$-group freely $\mbb{Q}$-generated by $A_0$. Furthermore, from Properties 4 and 5 in Fact \ref{p: properties_Q_completions} we have that $\langle A_0 \rangle$ has finite index in $G$ and that the subgroup $\langle A_0\rangle \leq G$ is  free nilpotent. This completes the proof of the theorem for the case $n\leq m$. 

The case $n> m$ follows from this result since then $G$ is a quotient of a group with full rank presentation of deficiency $0$ (corresponding to the case $n=m$ above).
\end{proof}

\subsection{Presentations and Smith normal forms}

In this subsection we study  the matrix $M(A, R)$ in relation to a group presentation $G= \langle A \mid R\rangle_{\mc{V}}$, where  $\mc{V}$ is any variety of groups. We show that $M(A, R)$ can be assumed to have a very simple form, a fact that will be  helpful when studying full rank $2$-step nilpotent groups.  

Recall that an integer matrix is said to be in Smith normal form if it has  zeroes everywhere except in the diagonal, which consists first of a possibly empty sequence of  nonzero integers and it is followed by a possibly empty sequence of zeroes. Observe that the relation matrix of a presentation in a variety $\mc{V}$, $G=\langle a_1, \dots, a_n \mid r_1, \dots, r_m\rangle_{\mc{V}}$, is in Smith normal form if and only if $r_i = a_i^{\alpha_i}c_i$ where $(\alpha_1, \dots, \alpha_m)$ is the diagonal of $M(A,R)$, and  the $c_i$ are elements from  $ G'$ for all $i=1,\dots, m$. 
%

\begin{lemma}\label{SmithLemma2}
Let $G$ be a group with presentation $\langle A \mid R\rangle_{\mc{V}}$ in a variety $\mc{V}$. 
Then there exist a group presentation $\langle A' \mid R'\rangle_{\mc{V}}$ with the following properties: 1) $G$ is isomorphic to the group defined by $\langle A' \mid R'\rangle_{\mc{V}}$, 2) $M(A', R')$ is in Smith normal form, 3) $|A| = |A'|$, $|R|=|R'|$, and $rank(M(A,R))=rank(M(A',R'))$.
%
%
\end{lemma}
\begin{proof} 
First suppose that $A'$ has been obtained from $A$ by  performing a Nielsen transformation $a_j \mapsto a_j'=a_i^{\pm 1}a_j$. 
Then $A'$ freely generates $N$, and $G \cong N(A)/\langle \langle R \rangle \rangle \cong N(A')/
\langle \langle R'\rangle \rangle$, where $R'$ denotes the set of words from $R$ rewritten as words on the alphabet $A'$. A similar situation holds when performing a Nielsen transformation on $R$. Hence, we may apply as many Nielsen transformations to $A$ and $R$ as we wish, without changing the isomorphism class of the group $G$.

On the other hand, recall that the Smith normal form of any integer matrix, in particular $M(A, R)$, can be obtained by successively adding or subtracting a row of $M(A,R)$ to another different row, or a column to another different column, and by reordering rows and columns. Of course, this does not change the rank of the matrix. It is an easy exercise to check that, for each matrix $M'$ obtained from $M(A,R)$ by applying one of these operations, one may perform one Nielsen transformations to $A$ or $R$ so that, for the resulting sets $A', R'$, the matrix $M(A', R')$ is precisely $M'$. More concretely, matrix row operations correspond to Nielsen transformations on $R$, and matrix column operations correspond to Nielsen transformations on $A$.  The lemma then follows from this remark and from the observations made in the previous paragraph.
\end{proof}

\begin{prop}\label{p: trivial_torsion_in_abelianization}
Let $G$ be a finitely generated nilpotent group of class $c \geq 2$ given by a finite full rank presentation $G = \langle a_1, \ldots,a_n \mid  r_1, \ldots,r_m \rangle_{\mc{N}_c}$. Assume that the abelianization $G/G'$ of $G$ has trivial torsion subgroup. Then $G$ is trivial if $m \geq n$; it is infinite cyclic if $m=n-1$; and it is free nilpotent of class $c$ and rank $n-m$ if $m\leq n-2$.
\end{prop}
\begin{proof}
By Lemma \ref{SmithLemma2} we can assume that $r_i = a^{n_i}c_i$ with $\alpha_i \in \mbb{Z}\setminus\{0\}, c_i \in G'$,  for all $i=1, \dots, m$. In this case $G/G'$ has presentation $\langle a_1, \dots, a_n \mid a_1^{\alpha_1} = 1, \dots, a_m^{\alpha_m}= 1\rangle_{\mc{N}_1}$ in the variety of abelian groups. From the hypothesis we have $a_1 = \dots = a_m = 1$ in $G/G'$, and so $G/G'$ is generated by $a_{m+1}, \dots, a_n$. A well-known property of nilpotent groups (Magnus lifting theorem) states that any generating set of $G/G'$ lifts to a generating set of  $G$, hence $G$ is generated by $a_{m+1}, \dots, a_n$. In particular, if $G/G'$ is trivial then so is $G$. The proposition is now a consequence of Theorem \ref{A}.
\end{proof}

\subsection{Further structural results in nilpotency class $2$}

In this subsection we prove some more results regarding the structure of $2$-step nilpotent groups given by a full rank presentation. We follow the notation of the previous section, assuming this time that $c=2$. Hence we assume that $G$ is a $2$-step nilpotent group given by a full rank presentation $G=\langle A \mid R \rangle_{\mc{N}_2}$. Due to Lemma \ref{SmithLemma2} we assume that $M(A, R)$ is in Smith normal form, which is equivalent to saying that $R=\{r_1, \dots, r_m\}$ with $r_i = a_i^{\alpha_i} c_i$ for some nonzero integers $\alpha_i$ and some elements $c_i$ from the commutator subgroup $G'$, for all $i=1, \dots, m$.
Recall that the commutator operation $[\cdot, \cdot]$ behaves "bilinearly" in $2$-step nilpotent groups (see \eqref{e: multilinear}). We shall use this fact without further reference.

The following is a technical lemma that will be used later for studying the center of $G$, as well as  centralizers of some of its generators.

\begin{lemma}\label{keyLemma_new}
Let $G$ be a finitely generated nilpotent group of class $c=2$ given by a finite full rank presentation $G = \langle A \mid  R \rangle_{\mc{N}_2} =\langle a_1, \ldots,a_n \mid  r_1, \ldots,r_m \rangle_{\mc{N}_2}$, with $m\leq n$ and with $M(A, R)$ in Smith normal form. Let $h$ be an element from $N$ of the form $$h= a_{m+1}^{\gamma_{m+1}} \dots a_n^{\gamma_n} c,$$ with   $c\in N'$ and $\gamma_i\in\mathbb{Z}$ for all $i$. Denote $A_{[1 : m]}^{\alpha} = \{a_1^{\alpha_1}, \dots, a_m^{\alpha_m}\}$ and let $$[N, A_{[1 : m]}^{\alpha}] = \langle \{[x,y]\mid x \in N,\ y \in A_{[1 : m]}^{\alpha}\} \rangle.$$ Assume the image of  $h$ in $G$ is $1$. Then $\gamma_{m+1}=\ldots=\gamma_n=0$, and $c\in  [N, A_{[1 : m]}^{\alpha}]$ in $N$.
\end{lemma} 
\begin{proof}
Since $h=1$ in $G$, $h$ can be written in $N$ as  a product of conjugates of elements from $R$ and their inverses.  Hence, there exists a sequence of elements $w_j \in N$, signs $\e_{j} \in \{+1, -1\}$, and relators $$r_{i_j} = a_{i_j}^{\alpha_{i_j}}c_{i_j} \in R=\{r_1, \dots, r_m\}, \quad (j=1, \dots, p),$$ such that, in $N$,
\begin{equation}\label{keyEqn}
h=a_{m+1}^{\gamma_{m+1}} \dots a_n^{\gamma_n} c  = \prod_{j=1}^p w_{j}^{-1} a_{i_j}^{\e_{j} \alpha_{i_j}} c_{i_j}^{\e_{j}} w_{j}
\end{equation}
We now wish to write the Malcev representation  of the right hand-side of \eqref{keyEqn}. To do so, it suffices to move the $a_{i_j}^{\epsilon_j \alpha_{i_j}}$ to the left by repeatedly applying the identity: $xy = yx [x,y]$. Once this is done, we move all commutators introduced this way, and all the $c_{i_j}$'s (which belong to the center of $N$ since $c=2$) to the left of \eqref{keyEqn}. The elements $w_j^{-1}$ and $w_j$ then cancel between themselves. Notice that during these operations we only introduce commutators from $[N, A_{[1: m]}^{\alpha}]$. For  $k=1, \dots, m$, let  $\l_k$ be the sum of those $\e_i$'s for which $i_j=k$, i.e.\
$$
\l_k = \sum_{\substack{i_j = k}} \e_{j}.
$$
Then  \eqref{keyEqn} can be written as
\begin{equation}\label{keyEqn3}
h=a_{m+1}^{\gamma_{m+1}} \dots a_n^{\gamma_n} c
= \left(\prod_{k=1}^{m} a_k^{\l_k \alpha_k} \prod_{k=1}^{m} c_{k}^{\l_k} \right) d,
\end{equation}
where $d \in [N, A_{[1:m]}^{\alpha}]$.
This equality takes place in the free $2$-step nilpotent group $N$. By uniqueness of Malcev coordinates (see \eqref{malcev_rep}), and since $\alpha_k\neq 0$ for all $k$, we have $\l_k=0$ for all $k\leq m$, and $\gamma_k=0$ for all $k\geq m+1$. It follows that, in $N$, $h=c=d \in  [N, A_{[1:m]}^{\alpha}]$, as needed. 
\end{proof}

It immediately follows:
\begin{cor}\label{c: infinite_order}
	Let $G$ be a finitely generated nilpotent group of class $2$ given by a finite full rank presentation $G = \langle A \mid  R \rangle_{\mc{N}_2}= \langle a_1, \ldots,a_n \mid  r_1, \ldots,r_m \rangle_{\mc{N}_2}$, with $m\leq n-1$ and with $M(A, R)$ in Smith normal form. Then $a_t$ has infinite order for all $t \geq m+1$, and moreover $[a_t, a_{t'}]$ has infinite order as well for any two $m+1\leq t \neq t' \leq n$.
\end{cor}

We are ready to describe the centralizers of some generators of $G$.

\begin{lemma}\label{l: centralizers}
Let $G$ be a finitely generated nilpotent group of class $2$ given by a finite full rank presentation $G = \langle a_1, \ldots,a_n \mid  r_1, \ldots,r_m \rangle_{\mc{N}_2}$, with $m\leq n - 1$ and with $M(A, R)$ in Smith normal form. Then for all $m+1\leq t \leq n$ and any element $h \in N$,  the images of $h$ and $a_t$ commute in $G$ if and only if 
$$
	h=a_t^{\gamma_t}\left(\prod_{i=1}^m a_i^{\alpha_i \beta_i}\right) c \quad \text{(in } N \text{)} 
$$
for some $\gamma_t\in \mbb{Z}$, $\beta_i \in \mbb{Z}$ ($i=1, \dots, m$) and  $c\in N'$.
Consequently, 
$$
C_G(a_t) = \langle a_t \rangle G'
$$  
for all $m+1\leq t \leq n$, where $C_G(a_t)$ denotes the centralizer of $a_t$ in $G$.
\end{lemma}
\begin{proof}
Let $t$ be such that $m+1\leq t \leq n$ and let $h\in N$ satisfy $[h,a_t] = 1$ in $G$. Write $h= \prod_{i= 1}^{n} a_i^{\lambda_i} c$ for some $\lambda_i \in \mbb{Z}$ and $c \in N'$. Then, in $N$,
$$
[h,a_t]= \prod_{\substack{i=1\\ i\neq t}}^{n} [a_i, a_t]^{\lambda_i} 
$$
Moreover, since $[h,a_t]=1$ in $G$,  Lemma \ref{keyLemma_new} ensures that $[h,a_t] \in \langle [N, A_{[1: m]}^{\alpha}]\rangle$ in $N$.  Using that the commutator operation behaves `biliniearly' in $2$-step nilpotent groups we obtain the existence of integers $\mu_{j}$ and elements $w_j \in N$ such that, in $N$,
$$
[h, a_t] = \prod_{\substack{i=1\\ i\neq t}}^{n} [a_i, a_t]^{\lambda_i} =\prod_{1\leq j \leq m} [w_j, a_j]^{\alpha_{j} \mu_{j}}.
$$
The Malcev representation of the right-hand side of the above equation uses only basic commutators of the form $[a_{s}, a_{s'}]^{\alpha_{*}}$ with either $s$ or $s'$ being at most $m$, and $\alpha_* \in \{\alpha_s, \alpha_{s'}\}$. It follows by unicity of Malcev coordinates and from $t\geq m+1$ that $\lambda_{i}=0$ for all $i\geq m+1$ with $i\neq t$, and  that $\alpha_j$ divides $\lambda_j$ for all $1\leq j\leq m$ with $j\neq t$. Hence $h =a_t^{\lambda_t}h_0$, where $h_0=\prod_{i=1}^{m} a_i^{\alpha_i \beta_i} c $ for some $\beta_i \in \mbb{Z}$, and thus $h$ has the desired form. Moreover, $h_0$ belongs to $G'$ in $G$, hence $h_0 \in \langle a_t \rangle G'$ in $G$. It follows that  $C_G(a_t) \leq \langle a_t \rangle G'$. The reverse inclusion is immediate since $G'\leq Z(G)$. 
\end{proof}

\begin{thm}\label{t: center}
Let $G$ be a finitely generated nilpotent group of class $2$ given by a finite full rank presentation $G = \langle a_1, \ldots,a_n \mid  r_1, \ldots,r_m \rangle_{\mc{N}_2}$, and where $m\leq n - 2$. Then $Z(G) = G'$.
\end{thm}
\begin{proof}
By Lemma \ref{SmithLemma2} we can assume without loss of generality that $M(A, R)$ is in Smith normal form. Suppose $m\leq n-2$, and let $\pi: G \to G/G'$ be the canonical projection of $G$ onto $G/G'$. Then, by the previous Lemma \ref{l: centralizers}, 
\begin{align*}
\pi(C_G(a_{n-1})\cap& C_G(a_n)) \leq \pi(C_G(a_{n-1})) \cap \pi(C_G(a_n))\\ = &\langle \pi(a_{n-1}) \rangle \cap \langle \pi(a_n) \rangle = 1,
\end{align*}
where the last equality is due to the fact that the torsion-free part of $G/G'$ is freely generated by $\pi(a_{m+1}), \dots, \pi(a_n)$, which is apparent from the presentation of $G$.

Therefore $Z(G) \leq C_G(a_{n-1}) \cap C_G(a_n) \leq G'$. The reverse inclusion is due to the fact that $G$ is of nilpotency class $2$.
\end{proof}

\begin{thm}\label{t: Torsion_in_Random_step2}
Let $G$ be a finitely generated nilpotent group of class $2$ given by a finite full rank presentation $G = \langle a_1, \ldots,a_n \mid  r_1, \ldots,r_m \rangle_{\mc{N}_2}$,  where $ m\leq n - 1$. Then the following are equivalent: 
\begin{enumerate}
    \item $G/G'$ has trivial torsion subgroup.
    \item $\gamma_2(G)$ has trivial torsion subgroup.
    \item $G$ has trivial torsion subgroup.
    \item $G$ is free nilpotent.
\end{enumerate}
\end{thm}
\begin{proof}
We prove that $1$ implies $2,3$ and $4$, and that if $1$ does not hold then neither $2,3$, nor $4$ hold.

Assume that $M(A, R)$ is in Smith normal form (due to Lemma \ref{SmithLemma2}). Suppose $G/G'$ has non-trivial torsion subgroup. Then at least one of the exponents $\alpha_i$, say $\alpha_1$, is neither $1$ or $-1$ in $G$. We have $[a_1, a_n]^{\alpha_1} = [a_1^{\alpha_1}, a_n] = [c_1, a_n] = 1$. Hence $[a_1, a_n]$ is a torsion element of $\gamma_2(G)$  provided that $a_1$ does not commute with $a_n$ in $G$. The latter follows from Lemma \ref{l: centralizers} and unicity of Malcev coordinates. Hence $\gamma_2(G)$ and $G$ have non-trivial torsion subgroups, and $G$ is not free nilpotent. 

If, on the other hand, $G/G'$ has trivial torsion subgroup then $G$ is free nilpotent due to Proposition \ref{p: trivial_torsion_in_abelianization}, hence torsion-free.
\end{proof}

\begin{question}
Is the statement of Theorem \ref{t: Torsion_in_Random_step2} true when the nilpotency class of $G$ is greater than $2$ (replacing $\gamma_2(G)$ by $\gamma_c(G)$)?
\end{question}

\subsection{Regularity, first-order rigidity, and QFA}

Recall that ${\it Is}(H)$, the \emph{isolator} of a subgroup $H\leq G$, is defined as $${\it Is}(H) = \left\{g \in G \mid g^n \in H \ \hbox{for some} \ n\in \mathbb{Z},\ n\neq 0 \right\}.$$ We say that $G$ is \emph{regular} if $Z(G) \leq {\it Is}(G')$. We will need the following observation:
\begin{remark}\label{r: from_s_to_2}
Let $G$ be a $c$-step nilpotent group with full rank presentation $G= \langle A\mid R\rangle_{\mc{N}_c}$. Then $\langle A\mid R\rangle_{\mc{N}_2}$ is a full rank presentation of $G/\gamma_3(G)$.
\end{remark}
 
\begin{thm}\label{AAA}
Let $G$ be a finitely generated nilpotent group of class $c \geq 2$ given by a finite full rank presentation $G = \langle a_1, \ldots,a_n \mid  r_1, \ldots,r_m \rangle_{\mc{N}_c}$. 
If $m \leq n-2$  then  $Z(G)\leq G'$. In particular $G$ is regular.
\end{thm}

\begin{proof}
Let $x\in G$ be such that $x\in Z(G)$. Then  $x\gamma_3(G) \in Z(G/\gamma_{3}(G))$. By Remark \ref{r: from_s_to_2} and Theorem \ref{t: center} we have $Z(G/\gamma_{3}(G))\leq (G/\gamma_{3}(G))/(G/\gamma_{3}(G))'$. Now $x \gamma_3(G)\in (G/\gamma_{3}(G))'$. But then $x \in G' \gamma_3(G) \leq G'$, as needed. 
\end{proof}

\begin{thm} \label{th:FAQ}
Let $G$ be a finitely generated nilpotent group of class $c \geq 2$ given by a finite full rank presentation $G = \langle a_1, \ldots,a_n \mid  r_1, \ldots,r_m \rangle_{\mc{N}_c}$. 
If $m \leq n-2$  then  $G$ is QFA, in particular it is first-order rigid.
\end{thm}
\begin{proof}
It is known \cite{Oger-Sabbagh} that any finitely generated regular nilpotent group is QFA. So the result follows from Theorem \ref{AAA}.
\end{proof}

\subsection{Diophantine problem in nilpotent groups of full rank}

We now obtain:
\begin{thm}\label{D}
Let $G$ be a finitely generated nilpotent group of class $c \geq 2$ given by a finite full rank presentation $G = \langle a_1, \ldots,a_n \mid  r_1, \ldots,r_m \rangle_{\mc{N}_c}$.  Then the following holds: 
\begin{enumerate}
\item If $m \leq n-2$, then the ring $\mathbb{Z}$ is e-interpretable in $G$ and the Diophantine problem in $G$ is undecidable.
\item If $m \geq n-1$ then the Diophantine problem in $G$ is decidable. %
\end{enumerate}
\end{thm}

\begin{proof}
Item 2 follows from Theorem \ref{A} and the fact that the Diophantine problem in  virtually abelian groups is decidable \cite{Ershov}. Assume therefore that $m\leq n-2$. By Lemma \ref{SmithLemma2} we assume without loss of generality that $M(A, R)$ is in Smith normal form.  Let $\pi:G \to G\gamma_3(G)$ be the natural projection. Then by Remark \ref{r: from_s_to_2} and Lemma \ref{l: centralizers},  $\pi(a_n)$ has centralizer $\{\pi(a_n)^t z \mid t\in \mbb{Z}, \ z \in Z(G/\gamma_3(G))\}$. Furthermore,  Corollary \ref{c: infinite_order} implies that $[a_{n-1}, a_n]$ has infinite order in $G$, from which it follows that the centralizer of $\pi(a_n)$ projects onto an infinite cyclic group in  $(G/\gamma_3(G))/Z(G/\gamma_3(G))$. Hence, $\pi(a_n)$ is a centralizer-small element and by Theorem \ref{mainThmEq2}, the ring $\mbb{Z}$ is e-interpretable in $G/\gamma_3(G)$.  Thus $\mbb{Z}$ is e-interpretable in $G$ by Proposition \ref{any_step} and transitivity of e-interpretations. Consequently, $\mc{D}(G)$ is undecidable, by Corollary \ref{RedCor}.
\end{proof}

\begin{thm}\label{t: largest_ring}
Let $G$ be a finitely generated nilpotent group of class $2$ given by a finite full rank presentation $G = \langle a_1, \ldots,a_n \mid  r_1, \ldots,r_m \rangle_{\mc{N}_c}$.  Then, if $m \leq n-2$, the largest ring of scalars of $G$ is $\mbb{Z}$. 
\end{thm}
\begin{proof}
In the proof of the previous Theorem \ref{D} we showed that, under the hypothesis of the present theorem and after using Lemma \ref{SmithLemma2} to bring $M(A, R)$ into Smith normal form, the element $a_n$ is centralizer-small. The result is now a direct consequence of Theorem \ref{CSmallMaxZ}.
\end{proof}

\subsection{Direct decompositions}

\begin{lemma}
\label{le:dir-dec}
Let $G$ be a finitely generated nilpotent group which has a non-abelian free nilpotent subgroup of finite index. If $G = G_1 \times G_2$ then either $G_1$ or $G_2$ is finite.
\end{lemma}
\begin{proof}
Let $N$ be a non-abelian free nilpotent subgroup of $G$ of finite index $d$. Suppose that $G = G_1 \times G_2$. Then $G^d \leq N$, and $G^d = G_1^d \times G_2^d$. Observe also that $G' = G_1' \times G_2'$. 

Assume now that both $G_1$ and $G_2$ are infinite. Then by Lemma \ref{SegalCor2} the groups $G_i/G_i'$ are also infinite, $i = 1,2$. In particular, they both have elements of infinite order. Let $g_i \in G_i$ be elements such that  the image of $g_i$ in $G_i/G_i'$ is infinite, $i = 1,2$.  One has $g_i^d \in N$, and  $g_i^d \not \in G_i'$. Hence $g_i^d \not \in G'$, in particular, $g_i^d \not \in N'$, $i = 1,2$. The image of the subgroup $H = \langle g_1^d, g_2^d\rangle$ in $G/G'$ is isomorphic to $\mathbb{Z}^+ \times \mathbb{Z}^+$ (a direct product of two infinite cyclic groups). Since $N' \leq G'$ it follows that the image of $H$ in $N/N'$ is also isomorphic to $\mathbb{Z}^+ \times \mathbb{Z}^+$. Since $N$ is non-abelian free nilpotent one has that $Z(N) \leq N'$, so the image of $H$ in $N/Z(N)$ is also isomorphic to $\mathbb{Z}^+ \times \mathbb{Z}^+$. 
Now observe, that $g_2^d \in C_N(g_1^d)$, so $H \leq C_N(g_1^d)$ and the image of $H$ in  $C_N(g_1^d)/Z(N)$ is again  $\mathbb{Z}^+ \times \mathbb{Z}^+$. However, it is known (see, for example, Proposition 5.1 of \cite{LM}) that the image of the centralizer $C_N(g)$  of any element $g \in N \smallsetminus N'$ is cyclic in the quotient $N/Z(N)$ - contradiction with the claim above that  the image of $H$ in $N/Z(N)$ is  isomorphic to $\mathbb{Z}^+ \times \mathbb{Z}^+$.  This shows that one of the groups $G_1$ or $G_2$ is finite, as claimed.
\end{proof}

\begin{thm}\label{t: direct_decompositions}
Let $G$ be a finitely generated nilpotent group of class $c \geq 2$ given by a finite full rank presentation $G = \langle a_1, \ldots,a_n \mid  r_1, \ldots,r_m \rangle_{\mc{N}_c}$.  
If $m \leq n-1$ then  in any direct decomposition of $G$ all, but one, direct factors are finite. 
\end{thm}

\begin{proof}
The case  $m\leq n-2$ follows directly from the previous Lemma \ref{le:dir-dec} and the fact that $G$ contains a free nilpotent subgroup of finite index due to Theorem \ref{A}.

 If $m = n-1$, then Theorem \ref{A} ensures that $G$ has an infinite cyclic subgroup $\langle c \rangle$ of finite index, say $d$. Hence if we had $G = G_1 \times G_2$ then $g_1^d \in \langle c \rangle$ for any $g_1 \in G_1$. Writing $c=c_1c_2$ with $c_1 \in G_1, c_2 \in G_2$, we have  that either $c_2$ has finite order or $g_1^d = 1$ for all $g_1 \in G_1$, in which case $G_1$ is finite. A symmetric statement holds for $c_1$ and $G_2$. Finally, since either $c_1$ or $c_2$ has infinite order, either $G_2$ or $G_1$ is finite, respectively.
\end{proof}

Notice that if $m\geq n$ then $G$ is finite due to Theorem \ref{A}, and there is nothing to say regarding the  decomposability of $G$ into infinite direct factors.

\section{Random finite presentations and random walks}\label{s: random_section}

Recall that, given some words $R=\{r_1, \dots, r_m\}$   on  $A^{\pm 1}=\{a_1^{\pm 1}, \dots, a_n^{\pm 1}\}$, we let  $M(A, R)$ denote the $m \times n$ matrix whose $(i,j)$-th entry is the sum of the exponents of all the $a_j$'s appearing in $r_i$.

In this section we prove:

\begin{thm}\label{B}
Let $R$ be a set of $m$ words of length $\ell$, each one obtained by successively concatenating  randomly chosen letters from $A^{\pm 1} = \{a_1^{\pm 1}, \dots, a_n^{\pm 1} \}$ with uniform probability. Then $M(A, R)$ has full rank (i.e.\ ${\it rank}(M(A, R))=\min\{n, m\}$) asymptotically almost surely as $\ell \to \infty$.
\end{thm}
We next describe in detail how the matrix $M(A, R)$ is related to the words $R$. Let $r_i \in R$ be a word of length $\ell$ obtained as in the statement of Theorem \ref{B}. I.e., $r_i$ is obtained first by randomly choosing a letter $x_{i,1}\in A^{\pm 1}$, then by choosing another letter $x_{i,2}$ and concatenating $x_{i,1}$ with $x_{i,2}$, and so on until we obtain the ``random'' word $w= x_{i,1} \dots x_{i,\ell}$. Projecting $w$ onto the abelianization $F/F' \cong \mbb{Z}^n$ of the free group $F(A)= F$ we obtain the following equality: 
\begin{equation}\label{e: random_word}
r_i F' = a_1^{\ell_{i,1}}, \dots, a_n^{\ell_{i,n}} F'
\end{equation} 
for some integers $\ell_{i,j}$ and $c_i \in F'$. Then the $i$-th row of $M(A, R)$  is precisely  $(\ell_{i,1}, \dots, \ell_{i,n})$.

It is worth observing what is happening in the Cayley graph $\Gamma$ of $F/F'$ each time a letter $x_{i,j+1}$ is concatenated with the  word $r_{i,j} =_{def} x_{i, 1} \dots x_{i,j}$. The element $r_{i,j}F'$ corresponds to a vertex $v$ of $\Gamma$, and multiplication of $r_{i,j}F'$ by $x_{i,j+1}F'$ results in taking a step  along the edge outgoing from $v$ with label $x_{i,j+1}$.  Once the $\ell$ letters have been concatenated we will have arrived at the vertex of $\Gamma$ with label \eqref{e: random_word}. Thus each row of $M(A, R)$ is obtained by taking a \emph{random walk} in the Cayley graph of $\mbb{Z}^n$, starting at the origin (we remark that our random walks allow backtracking).  Thus Theorem \ref{B} can be rephrased as follows: 
\begin{thm}[Reformulation of Theorem \ref{B}]\label{B'}
The final positions of $m\leq n$ independent random walks of length $\ell$ in $\mathbb{Z}^{n}$ are  linearly independent asymptotically almost surely as $\ell \to \infty$, seen as vectors of $\mathbb{R}^n$.
\end{thm}
In the next subsection we provide a more formal treatment of the notion of random walk. A comprehensive treatment of this subject can be found in  \cite{Lawler}, for example. 

\subsection{Central Limit Theorems}
The goal of this subsection is to prove Proposition \ref{distribution_bound} (see below), which provides an upper bound on the probability that a random walk ends at a specific point of $\mbb{Z}^n$. 

Let $A= \{a_1, \dots, a_n\}$ be a basis of $\mbb{Z}^n$. We identify each $a_i$ with the $n$-vector that has $1$ in its $i$-th component and $0$ in all other components. Correspondingly, in what follows we use additive notation for $\mbb{Z}^n$.

One can model a \emph{random walk} in $\mbb{Z}^n$ (with respect to the basis $A$) in the following way: Let $X$ be a random variable taking values in $A^{\pm 1}$ with uniform probability. Then $X$ can be interpreted as a random variable that indicates the position one will be in after taking a random step in the Cayley graph of $\mbb{Z}^n$ (with respect to the basis $A$, and starting at the origin vertex, denoted $0$). For this reason one says that $X$ is a \emph{one-step random walk} in $\mbb{Z}^n$. This idea can be easily extended to random walks of arbitrary length $\ell$: in this case one takes a sequence of $\ell$ independent identically distributed (i.i.d.)\ random variables $X_1, \dots, X_{\ell}$, each one taking values in $A^{\pm 1}$ with uniform probability. Then the random variable $S_{\ell}=X_1+\ldots+X_{\ell}$ is called a \emph{$\ell$-step random walk in $\mathbb{Z}^n$}. Similarly as before, $S_{\ell}$ can be thought of as a random variable that indicates the vertex one will be in after taking $\ell$ random steps in the Cayley graph of $\mbb{Z}^n$, if one starts at the origin $0$. An (infinite) \emph{random walk} $S$ is, informally speaking, the limit of $S_{\ell}$ as $\ell$ tends to infinity.  Formally, $S$ is defined as the sequence  $(S_0,S_1,\dots,S_{\ell},\dots)$, with $S_0=0$. 
 
Let now $X_{\ell}$ be one of the random variables introduced above.  We can identify $X_{\ell}$ with a tuple of random variables $(x_{\ell,1},\dots,x_{\ell,n})$, in which case one sees that, for each $i=1, \dots, n$, the coordinate  $x_{\ell,i}$ takes values $1, -1, 0$ with probabilities $1/2n, 1/2n$ and $1-1/n$, respectively. Notice that the random variables $x_{\ell,1}, \dots, x_{\ell,n}$ are not independent (for example, if one of them takes the value $1$ then the others must take the value $0$). On the other hand, since the $X_{\ell}$'s are i.i.d., so are all  variables in the set $\mathcal{X}_i = \{x_{\ell,i} \mid \ell \geq 0\}$, for a fixed $i$. Similarly, one can write $S_\ell$ in coordinate form: $S_{\ell} = (s_{\ell, 1}, \dots, s_{\ell,n})$. Then $s_{\ell, i} = x_{1,i} + \dots + x_{\ell, i}$ for all $i=1, \dots, n$. We now  use the Central Limit Theorem (CLT) to find the asymptotic behavior (as $\ell \to \infty$, with $i$ fixed) of each random variable $s_{\ell,i}$.
Observe that each $x_{\ell,i}$ has expected value and variance  
\begin{align}
&\mbb{E}\left( x_{\ell,i} \right) = 1\frac{1}{2n}+(-1)\frac{1}{2n} = 0, \\  
&{\it Var}\left(x_{\ell,i}\right)= \mbb{E}\left(\left(x_{\ell,i} - \mbb{E}(x_{\ell,i})\right)^2\right) = \mbb{E}\left(x_{\ell,i}^2\right) - \mbb{E}\left(x_{\ell,i}\right)^2=\frac{2}{2n}=\frac{1}{n},
\end{align}
and hence, by the CLT, $s_{\ell,i}/{\sqrt{\ell}}$ converges in distribution to the normal distribution $N(0, 1/n)$  with expectation $0$ and standard deviation $1/\sqrt{n}$. More precisely, for every $i=1,\dots, n$, and $M,N\in \mathbb{R}\cup\{\pm\infty\}$, 
\begin{equation}\label{conv_dist}
\lim_{\ell \to \infty} \left[ \mathbb{P}\left( N<\frac{s_{\ell,i}}{\sqrt{\ell}}< M \right)  \right]= \mathbb{P}\left( N < \xi < M \right),
\end{equation} 
where $\xi$ is a random variable with distribution $N(0, 1/n)$. Moreover, the $x_{\ell,i}$'s  have  finite third moment, indeed $\mbb{E}(|x_{\ell,i}|^3)=1 < \infty$. Thus, by the Berry-Esseen Theorem, the convergence in \eqref{conv_dist} occurs uniformly in the following way: there exists a constant $C$ such that, for all $i, N, M$ and $\ell$,
\begin{equation}\label{conv_dist_2}
\biggl\lvert  \mathbb{P}\left(N<\frac{s_{\ell,i}}{\sqrt{\ell}}<M\right)- \mathbb{P}\left(N<\xi<M\right)  \biggl\lvert \leq \frac{C}{\sqrt{\ell}}.
\end{equation} 
We are now in a position to prove the following:
\begin{lemma}\label{escape_speed}
Given a sequence $\epsilon_\ell$ with $ \lim_{\ell\to \infty} \epsilon_\ell = \infty$, the following holds for all $i$:
$$
\lim_{\ell\to \infty} \left[\mathbb{P}\left(\biggl|\frac{s_{\ell,i}}{\sqrt{\ell}}\biggr|\geq\epsilon_\ell\right)\right]= 0. 
$$ 
\end{lemma}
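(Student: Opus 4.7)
The plan is to derive the lemma as a direct consequence of the Berry--Esseen bound \eqref{conv_dist_2} combined with the fact that the tails of the normal distribution $N(0,1/m)$ vanish at infinity. There is no deep probabilistic content beyond these two ingredients; the statement is essentially a quantitative version of the remark that $s_{n,i}/\sqrt{n}$ is asymptotically Gaussian, hence does not escape to infinity faster than $\sqrt{n}$.

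Concretely, I would fix $i$ and apply \eqref{conv_dist_2} twice, once with $(N,M)=(\epsilon_n,\infty)$ and once with $(N,M)=(-\infty,-\epsilon_n)$. Adding the two resulting inequalities and using that
\[
\mathbb{P}\!\left(\left|\frac{s_{n,i}}{\sqrt{n}}\right|\geq \epsilon_n\right)
=\mathbb{P}\!\left(\frac{s_{n,i}}{\sqrt{n}}\geq\epsilon_n\right)+\mathbb{P}\!\left(\frac{s_{n,i}}{\sqrt{n}}\leq-\epsilon_n\right),
\]
one obtains
\[
\mathbb{P}\!\left(\left|\frac{s_{n,i}}{\sqrt{n}}\right|\geq \epsilon_n\right)
\leq \mathbb{P}(|\xi|\geq \epsilon_n)+\frac{2C}{\sqrt{n}},
\]
where $\xi\sim N(0,1/m)$.

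Now I let $n\to\infty$. The term $2C/\sqrt{n}$ vanishes. For the first term, since $\xi$ has a density on $\mathbb{R}$ and $\epsilon_n\to\infty$, the tail probability $\mathbb{P}(|\xi|\geq \epsilon_n)$ tends to $0$ (this is just the fact that the Gaussian distribution function has limits $0$ and $1$ at $\mp\infty$, or equivalently that $\int_{|x|\geq\epsilon_n}\tfrac{\sqrt{m}}{\sqrt{2\pi}}e^{-mx^2/2}\,dx\to 0$). Therefore the whole upper bound tends to $0$, which yields the claim. I do not foresee a real obstacle: the only subtlety is to apply the \emph{uniform} Berry--Esseen estimate \eqref{conv_dist_2} rather than the pointwise convergence \eqref{conv_dist}, since the endpoint $\epsilon_n$ moves with $n$.
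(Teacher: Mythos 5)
Your proof is correct and follows essentially the same route as the paper: both apply the uniform Berry--Esseen bound \eqref{conv_dist_2} and then use that the Gaussian tail $\mathbb{P}(|\xi|\geq\epsilon_n)$ vanishes as $\epsilon_n\to\infty$. If anything, your formulation is slightly cleaner: the paper bounds $\mathbb{P}(|s_{n,i}/\sqrt{n}|<\epsilon_n)$ from above when it needs a lower bound (the displayed inequality should read $\geq \mathbb{P}(|\xi|<\epsilon_n)-C/\sqrt{n}$), whereas you work directly with the tail and get the inequality in the right direction.
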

\begin{proof}
Indeed, it suffices to show that $\mathbb{P}(|s_{\ell,i}/\sqrt{\ell}| < \epsilon_\ell) \to 1$. First notice that $\mathbb{P}(|\zeta|<\epsilon_\ell)\to 1$ for any random variable $\zeta$ following a uniform distribution. Hence, by \eqref{conv_dist_2},
$$
\mathbb{P}\left(|(s_{\ell,i}/\sqrt{\ell})| < \epsilon_\ell\right) \geq   \left(  \mathbb{P}\left(|\xi|<\epsilon_\ell\right) - C/\sqrt{\ell} \right) \to 1.
$$
\end{proof}
Intuitively, this result shows that, for large $\ell$, the variables $s_{\ell,i}$ have absolute value not larger than $\e_\ell\sqrt{\ell}$ almost surely. For our arguments we will use $\epsilon_\ell=\ln(\ell)$, though it suffices to take any sequence that approaches infinity slowly enough.

We will also need the following  local version of the Central Limit Theorem for random walks. Given a point $T=(t_1, \dots, t_n)\in \mathbb{Z}^n$, denote by $p_\ell(T)$ the probabilty of being at  $T$ on the $\ell$-th step of a random walk, i.e.\ $p_\ell(T)=\mathbb{P}(S_\ell=T)$.
\begin{LCLT}\label{Local_CLT}
Following the notation above, there exists a constant $c_0$ such that, for all $T\in \mathbb{Z}^n$ and $\ell\in \mathbb{N}$,
$$
\biggl|p_\ell(T)+p_{\ell+1}(T)-2\tilde{p}_\ell(T)\biggr|< \frac{c_0}{\ell^{(n+2)/2}},
$$ 
where 
$$
\tilde{p}_\ell(T)= \frac{1}{\left(2\pi \ell\right)^{n/2}c_1} e^{-\frac{J(T)^2}{2\ell}},
$$
and $c_1$ is a positive constant and $J(\cdot)^2$ is a positive definite quadratic form.
\end{LCLT}
This provides an  upper bound for $p_\ell(T)$:
\begin{prop} \label{distribution_bound}
There exists a  constant $c_3$ such that, for all $T\in \mathbb{Z}^n$ and $\ell\in \mathbb{N}$,
$$
p_\ell(T)\leq \frac{c_3}{\ell^{n/2}}. 
$$ 
\end{prop}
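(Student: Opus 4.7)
The plan is to read off the bound directly from the Local Central Limit Theorem stated immediately above, treating $p_n(T)$ as controlled by $p_n(T)+p_{n+1}(T)$ and then bounding the Gaussian approximation $\tilde{p}_n(T)$ trivially by its supremum in $T$.

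First I would observe that, because $J(\cdot)^2$ is positive definite and in particular nonnegative, the exponential factor satisfies $e^{-J(T)^2/(2n)} \leq 1$ for every $T \in \mathbb{Z}^m$ and every $n \in \mathbb{N}$. Hence
\[
\tilde{p}_n(T) \;\leq\; \frac{1}{(2\pi n)^{m/2} c_1} \;=\; \frac{1}{(2\pi)^{m/2} c_1}\cdot\frac{1}{n^{m/2}}.
\]
Next, applying the LCLT and the triangle inequality,
\[
p_n(T)+p_{n+1}(T) \;\leq\; \tilde{p}_n(T) + \frac{c_0}{n^{(m+2)/2}} \;\leq\; \frac{1}{(2\pi)^{m/2}c_1}\cdot\frac{1}{n^{m/2}} + \frac{c_0}{n^{(m+2)/2}}.
\]
For $n \geq 1$ the second term is dominated by $c_0/n^{m/2}$, so combining constants yields a bound of the form $c_3'/n^{m/2}$ for some constant $c_3'$ depending only on $m$, $c_0$, $c_1$.

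Finally, since $p_{n+1}(T)\geq 0$, we have $p_n(T) \leq p_n(T)+p_{n+1}(T) \leq c_3'/n^{m/2}$. This gives the stated inequality for all $n \geq 1$; the case $n=0$ is vacuous (or trivially adjusted by enlarging the constant) since $p_0(T)$ is either $0$ or $1$ and can be absorbed into the constant $c_3$.

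I do not anticipate a genuine obstacle here: the statement is essentially an immediate corollary of the LCLT once one notes that the Gaussian prefactor $1/(2\pi n)^{m/2}c_1$ is, up to constants, exactly the claimed rate $n^{-m/2}$, and that the error term is of strictly smaller order. The only minor care needed is to verify that $J(\cdot)^2 \geq 0$ so that $e^{-J(T)^2/(2n)}\leq 1$, which is built into the definition of ``positive definite quadratic form'' in the LCLT statement.
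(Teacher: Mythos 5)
Your proof is correct and follows essentially the same route as the paper: bound $p_n(T)$ by $p_n(T)+p_{n+1}(T)$, apply the LCLT, bound $\tilde{p}_n(T)$ by its supremum (which is $O(n^{-m/2})$), and absorb the lower-order error term into the constant. The paper compresses these steps into a one-line chain of inequalities, but the substance is identical.
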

\begin{proof}
By the Local Central Limit Theorem there exist positive constants $c_2, c_3$ such that
$$
p_\ell(T) \leq |p_\ell(T)+p_{\ell+1}(T)|\leq 2\tilde{p}_\ell(T)+  \frac{c_0}{\ell^{(n+2)/2}}\leq \frac{c_2}{\ell^{n/2}}+\frac{c_0}{\ell^{(n+2)/2}}\leq \frac{c_3}{\ell^{n/2}}.$$ 
\end{proof}

\subsection{Proof of linear independence}

Consider $m$ independent random walks of $\ell$ steps in $\mathbb{Z}^n$, $$S_{j,\ell}= (s_{j,\ell, 1}, \dots, s_{j, \ell, n}), \quad j=1, \dots, m,$$ where here and in what follows we  maintain the notation of the previous section,  adding an extra subindex $j$ (ranging from $1$ to $m$) when appropriate. Let $M_\ell$ be  the $m \times n$ matrix whose $j$-th row consists in the components of $S_{j,\ell}$, i.e.\  $(s_{j,\ell,1}, \dots, s_{j,\ell,n})$, ($j=1, \dots, m$).

The goal of this subsection is to prove Theorem \ref{B}. This result states that
\begin{equation}\label{random_matrix_is_max_rank}
\lim_{\ell\to \infty}\mathbb{P}\left({\rm rank}(M_\ell)={\rm min}(m,n)\right)= 1.
\end{equation}
In other words, that \emph{the matrices $M_\ell$ have full rank asymptotically almost surely as $\ell$ tends to infinity.} As explained at the beginning of Section \ref{s: random_section}, this is equivalent to proving that the $m$ independent random walks $S_{1,\ell}, \dots, S_{m,\ell}$ finish at $m$ linearly independent vectors of $\mbb{Z}^n$ asymptotically almost surely as $\ell$ tends to infinity (see Theorem \ref{B'}).
Given an $m\times n$ matrix $M=(m_{i,j}\mid i,j)$ with integer  entries, let $f(M)$ be the polynomial 
$$
f(M)=f\left(m_{i,j} \mid i,j\right)=\sum  \left[{\rm det}(M_0)\right]^2,
$$ 
where the sum runs over all maximal minors $M_0$ of $M$. Of course, $f(M)=0$ if and only if  ${\rm det}(M_0)=0$ for all $M_0$, i.e.\ if and only if $M$ does not have full rank. We will need the following  combinatorial result to estimate the number of roots of $f$ in any given finite set:
\begin{SZL}\label{Schwartz}
Let $f(\mathbf{x})=f(x_1,\ldots,x_N)\in \mathbb{C}[x_1,\ldots,x_N]$  be a polynomial of degree $d$ on $N$ variables,  and let $I$ be a finite set of complex numbers. Then
$$
\bigl|\left\{ \mathbf{x} \in I^N \mid f(\mathbf{x})=0 \right\}\bigr| \leq d |I|^{N-1}.
$$
\end{SZL}
\begin{proof}[\textbf{\emph{Proof of Theorem \ref{B}}}]
We need to show that $\mathbb{P}\left({\rm rank}(M_\ell)={\rm min}(m,n)\right) \to  1$ as $\ell\to \infty$, or, equivalently, that $\mathbb{P}\left(f(M_\ell)=0\right)\to 0$.  Write $\epsilon_\ell = \ln(\ell)$, and let $\mathcal{M}_\ell$ be the set of $m \times n$ integer matrices $M$ such that $|m_{i,j}|  < \sqrt{\ell} \epsilon_\ell$ for all entries $m_{i,j}$ of $M$. Then

\begin{equation}\label{eq2}
\mathbb{P}\left( f(M_\ell)=0 \right) \leq  \mathbb{P}\left( f(M_\ell)=0, \ M_\ell \in \mathcal{M}_\ell \right) +  \mathbb{P}\left( M_\ell \notin \mathcal{M}_\ell \right)
\end{equation}
for all $\ell$. Recall that the $j$-th row of $M_\ell$ is $S_{j,\ell}=\left(s_{j,\ell,i} \mid i=1,\dots, n \right)$. By Lemma \ref{escape_speed},
\begin{align*}
\mathbb{P}\left( M_\ell \notin \mathcal{M}_\ell \right) &\leq \sum_{j=1}^m \mathbb{P} \left( S_{j, \ell} \ \hbox{is such that} \  
|s_{j,\ell,i}| \geq \sqrt{\ell} \epsilon_\ell \ \hbox{for some} \ i \right)\\
&\leq \sum_{j,i}  \mathbb{P}\left( S_{j, \ell} \ \hbox{is such that} \ 
|s_{j,\ell,i}| \geq \sqrt{\ell} \epsilon_\ell \right) \to 0.
\end{align*} 
We now  bound the first summand of \eqref{eq2}.  First notice that 
$$
\mathbb{P}\left( f(M_\ell)=0, \  M_\ell \in \mathcal{M}_\ell \right) = \sum_{i=1}^{k_\ell} \mathbb{P}\left( M_\ell = T_{\ell,i} \right),
$$
where $T_{\ell,1}, \dots, T_{\ell, k_\ell}$ are all the zeros of $f$ in $\mathcal{M}_\ell$ (identifying  $m \times n$ matrices with vectors from $\mathbb{Z}^{mn} \subseteq \mbb{C}^{mn}$).
By the Schwartz-Zippel Lemma, 
$$
k_\ell\leq d\left(2\sqrt{\ell}\epsilon_\ell +1\right)^{m n-1} \leq d\left(3 \sqrt{\ell}\epsilon_\ell\right)^{mn-1},
$$
where  $d$  is the degree of $f$.
Now observe that, for fixed $\ell$ and $i$, $M_\ell=T_{\ell,i}$ if and only if the $j$-th row $S_{j,\ell}$ of $M_\ell$ is equal to the $j$-th row of $T_{\ell,i}$, which we denote $T_{j,\ell,i}$, for all $j=1, \dots, m$. Using Proposition \ref{distribution_bound} and the fact that the $S_{j,\ell}$  ($j=1,\dots,m$) form a set of $m$ independent random walks, 
$$
\mathbb{P}\left( M_\ell=T_{\ell,i}\right) = \prod_{j=1}^{m} \mathbb{P}(S_{j,\ell} = T_{j,\ell,i}) \leq \left( \frac{c_3}{\ell^{n/2}} \right)^m
$$
for every $i=1, \dots, k_\ell$. So far $\varepsilon_{\ell}$  was an arbitrary sequence tending to infinity. Taking $\varepsilon_{\ell} = \ln(\ell)$ we obtain
\begin{align}
\mathbb{P}\left( f(M_\ell)=0, \ M_\ell \in \mathcal{M}_\ell \right) \leq  d\left(3 \sqrt{\ell}\epsilon_\ell\right)^{m n-1}\left(  \frac{c_3}{\ell^{mn/2}} \right)^m \leq c_4 \frac{1}{\sqrt{\ell}},\nonumber
\end{align}
for some positive constant $c_4$. 
Hence $\mathbb{P}\left(f(M_\ell)=0\right) \to 0$, as needed.
\end{proof}

\section{Random nilpotent groups}

\label{sec:random}

In this section we study random nilpotent groups according to the few-relators model.
More precisely, we consider a group presentation 
\begin{equation}\label{e: main_presentation2}
G=\langle a_1, \dots, a_n \mid r_1, \dots, r_m\rangle_{\mathcal{N}_c} = \langle A \mid R \rangle_{\mc{N}_c}
\end{equation}
in the variety of $c$-step nilpotent groups, where $R$ is a set of $m$ words of length $\ell$ chosen randomly as explained in Section \ref{s: random_section}. We then study the asymptotic properties of $G$ as $\ell$ tends to infinity. 

The key observation is that due to Theorem \ref{B} we know that, asymptotically almost surely as $\ell \to \infty$, the presentation \eqref{e: main_presentation2} is of full rank. Therefore, all results obtained so far for nilpotent groups given by a full rank presentation hold also for `random' nilpotent groups asymptotically almost surely. We next present the corresonding results. From Theorem \ref{A} we get:

\begin{thm}
Let $n, m, c \in \mbb{N}$, and let $G$ be a finitely generated $c$-step nilpotent group given by a presentation $\langle a_1, \dots, a_n \mid r_1, \dots, r_m \rangle_{\mc{N}_c}$, where all relators $r_i$ have length $\ell$. Then  the following holds asymptotically almost surely  as $\ell \to \infty$: There exists a subset $A_0\subseteq \{a_1, \dots, a_n\}$ with $|A_0| = n-m$ ($A_0=\emptyset$ if $m\geq n$) such that the following is true:
\begin{enumerate}
\item If $m\geq n$, then $G$ is finite.
\item If $m=n-1$, then $\langle A_0\rangle$ is infinite cyclic and has finite index in $G$.%
\item If $m \leq n-2$, then $\langle A_0 \rangle$ is a free $c$-step nilpotent subgroup of rank $n-m$ which has finite index in $G$.
\end{enumerate}
 Furthermore, $A_0$ can be chosen to be  any set of $n-m$ generators $\{a_{i_j}\mid 1\leq j\leq n-m\}$ such that the rank of the matrix $M(A, R)$ coincides with the rank of the matrix obtained from $M(A, R)$ after removing its $i_1, \dots, i_{n-m}$-th columns. 
\end{thm}

One can say more if the nilpotency class is $2$. From Lemma \ref{l: centralizers} and Theorem \ref{t: center} we have:
\begin{prop}\label{t: random_2_step}
Let $ n, m \in \mbb{N}$, and let $G$ be a finitely generated $2$-step nilpotent group given by a presentation $\langle a_1, \dots, a_n \mid r_1, \dots, r_m \rangle_{\mc{N}_2}$, where all relators $r_i$ have length $\ell$. Then if $m \leq n-2$ the following holds asymptotically almost surely  as $\ell \to \infty$: $G$ is isomorphic to a group given by a full-rank presentation $\langle b_1,\ldots,b_n\mid r_1',\ldots,r_m'\rangle_{\mc{N}_2}$ in Smith normal form, which implies $Z(G) = G'$ and $C_G(b_i) = \langle b_i\rangle Z(G)$ for all $i = m+1, \dots, n$.
\end{prop}

Recall that  Theorem \ref{t: Torsion_in_Random_step2} states that $T(G)$ is trivial if and only if $T(G/G')$ is trivial (and also that the latter occurs if and only if $G$ is free nilpotent); where here $G$ is a $2$-step nilpotent group given by a full rank presentation, and $T(G)$ denotes the torsion subgroup of $G$. Consequently, the asymptotic probability that a f.p.\ $2$-step nilpotent group $G$ has trivial torsion is precisely the asymptotic probability that $G$ is free nilpotent, which in turn coincides with \emph{the asymptotic probability that a f.p.\ abelian group has trivial torsion}. To the best of our knowledge, the latter is not known and we leave it as an open question. As a partial contribution we prove that this probability is not one. For this we  use some of the results in \cite{Duchin} ---in particular the statement regarding torsion when $m=1$ is proved in such reference.

\begin{prop}
Let $ n, m \in \mbb{N}$, and let $G$ be a finitely generated $2$-step nilpotent group given by a presentation $\langle a_1, \dots, a_n \mid r_1, \dots, r_m \rangle_{\mc{N}_2}$, where $m\leq n-1$ and  all relators $r_i$ have length $\ell$. Let $p_t$ be  the asymptotic probability as $\ell \to \infty$ that $G$ has trivial torsion subgroup, and let $p_f$ be the asymptotic probability that $G$ is free nilpotent. Then $p_t = p_f < 1$. If $m= 1$ then these probabilities are precisely $1 - \frac{1}{\zeta(n)}$, where $\zeta$ is Riemann's zeta function.
\end{prop}
\begin{proof}
Let $M(A, R)$ be the relation matrix of the presentation  $G=\langle A \mid R \rangle_{\mc{N}_2}=\langle a_1, \dots, a_n \mid r_1, \dots, r_m \rangle_{\mc{N}_2}$. Due to Theorem \ref{B}, we can assume without loss of generality that this presentation is of full-rank and thus that $M(A, R)$ also has maximum rank $m$. Let $d$ be the greatest common divisor of the determinants of all $m \times m$ minors of $M(A, R)$. It follows from Proposition 4.3, Chapter 8 of \cite{sims} that if $d \neq 1$ then the Smith normal form of $M(A, R)$ has an entry which is neither $1$ nor $-1$. In such a case, by Lemma \ref{SmithLemma2}  the abelianization of $G$ has non-trivial torsion subgroup, and Theorem \ref{t: Torsion_in_Random_step2} then implies that $G$ has non-trivial torsion subgroup.  Hence it suffices to prove that $d\neq 1,-1$ with non-zero probability. For this, let $d'$ be the greatest common divisor of the entries in the first row of $M(A, R)$, and let $M'$ be an arbitrary $m\times m$ submatrix of $M(A,R)$. Let $1\leq i\leq m$ be such that $M'$  starts with the $i$-th column of $M(A, R)$ and continues up to the $(i+m-1)$-th column of $M(A, R)$. Denote by $r_{s,t}$ the entries of $M(A, R)$ ($1\leq s \leq m$, $1\leq t \leq n$). Then $$det(M')= \sum_{j=1}^{m} (-1)^{j-1} r_{1, j+i} M_j'$$ for certain $m-1 \times m-1$ submatrices $M_j'$ of $M'$.  Therefore $d'$ divides $det(M')$. Since $M'$ was an arbitrary $m\times m$ minor of $M(A,R)$, $d'$  divides  the determinants of all $m\times m$ minors of $M(A,R)$, and so $d'$ divides $d$. Since the asymptotic probability that $d'\neq 1, -1$ is precisely $1- 1/\zeta(n)$ (see Corollary 17 of \cite{Duchin}), we have that $d \neq 1, -1$ with asymptotic probability at least  $1- \frac{1}{\zeta(n)}$. If $m= 1$ then $d=d'$ and then the  asymptotic probability that $G$ has non-trivial torsion subgroup is precisely $1- \frac{1}{\zeta(n)}$.
\end{proof}

Regarding regularity and the properties of being QFA and first-order rigid using Theorem \ref{th:FAQ}:
\begin{thm}
Let $n, m, c \in \mbb{N}$, and let $G$ be a finitely generated $c$-step nilpotent group given by a presentation $\langle a_1, \dots, a_n \mid r_1, \dots, r_m \rangle_{\mc{N}_c}$,  where $c \geq 2$ and all relators $r_i$ have length $\ell$. Then the following holds asymptotically almost surely  as $\ell \to \infty$: 
If $m \leq n-2$  then  $G$ is QFA, in particular it is first-order rigid.
\end{thm}

Next we address the Diophantine problem of a random nilpotent group using Theorem \ref{D}:

\begin{thm}
Let $n, m, c \in \mbb{N}$, and let $G$ be a finitely generated $c$-step nilpotent group given by a presentation $\langle a_1, \dots, a_n \mid r_1, \dots, r_m \rangle_{\mc{N}_c}$, where $c\geq 2$, and all relators $r_i$ have length $\ell$. Then  the following holds asymptotically almost surely  as $\ell \to \infty$: 
\begin{enumerate}
\item If $m \leq n-2$, then the ring $\mathbb{Z}$ is e-interpretable in $G$  and the Diophantine problem in $G$ is undecidable.
\item If $m \geq n-1$  then the Diophantine problem in $G$ is decidable. 
\end{enumerate}
\end{thm}

Finally, the corresponding result from the Theorem \ref{t: direct_decompositions} for direct decompositions of random nilpotent groups is:

\begin{thm}
Let $n, m, c \in \mbb{N}$, and let $G$ be a finitely generated $c$-step nilpotent group given by a presentation $\langle a_1, \dots, a_n \mid r_1, \dots, r_m \rangle_{\mc{N}_c}$, where $c\geq 2$, and all relators $r_i$ have length $\ell$. If $m \leq n-2$ then the following holds asymptotically almost surely as $\ell \to \infty$:  In any direct decomposition of $G$ all, but one, direct factors are finite. 
\end{thm}

\section{Acknowledgements}

The authors are grateful to an anonymous referee whose comments helped improve and shorten the proof of Theorem \ref{A}.

This work was supported by  Russian Science Foundation grant project  19-11-00209.

Additionally, the first named author was supported by the ERC grant PCG-336983, by the Spanish Government grant IT974-16, and by the Ministry of Economy, Industry and Competitiveness of the Spanish Government Grant MTM2017-86802-P.

\bibliography{bib}

\end{document}